\renewcommand{\baselinestretch}{\baselinestretch}
\renewcommand{\baselinestretch}{1.1}
\numberwithin{equation}{section}
\newtheorem{thm}{Theorem}[section]
\newtheorem{lem}[thm]{Lemma}
\theoremstyle{definition}
\theoremstyle{remark}
\newtheorem{rmk}[thm]{Remark}
\numberwithin{equation}{section}
\newcommand{\Mod}[1]{\ (\mathrm{mod}\ #1)}
\begin{document}

\title{Universal mixed sums of generalized $4$- and $8$-gonal numbers}
\author{Jangwon Ju and Byeong-Kweon Oh }

\address{Research institute of Mathematics, Seoul National University, Seoul 08826, Korea}
\email{jjw@snu.ac.kr}
\thanks{This work of the first author was supported by Basic Science Research Program through the National Research Foundation of Korea(NRF) funded by the Ministry of Education(NRF-2018R1A6A3A01012662)}

\address{Department of Mathematical Sciences and Research Institute of Mathematics, Seoul National University, Seoul 08826, Korea}
\email{bkoh@snu.ac.kr}
\thanks{This work of the second author was supported by the National Research Foundation of Korea (NRF-2017R1A2B4003758).}

\subjclass[2010]{Primary 11E12, 11E20}

\keywords{Generalized polygonal numbers, Universal mixed sums}

\begin{abstract}  
An integer of the form $P_m(x)= \frac{(m-2)x^2-(m-4)x}{2}$ for an integer $x$, is called a generalized $m$-gonal number. 
For positive integers $\alpha_1,\dots,\alpha_u$ and $\beta_1,\dots,\beta_v$, a mixed sum $\Phi=\alpha_1P_4(x_1)+\cdots+\alpha_uP_4(x_u)+\beta_1P_8(y_1)+\cdots+\beta_vP_8(y_v)$ of generalized $4$- and $8$-gonal numbers is called universal if $\Phi=N$ has an integer solution for any nonnegative integer $N$.
 In this article, we prove that there are exactly 1271 proper universal mixed sums of generalized $4$- and $8$-gonal numbers. 
Furthermore, the ``$61$-theorem" is proved, which states that an arbitrary mixed sum of generalized $4$- and $8$-gonal numbers is universal if and only if it represents the integers $1$, $2$, $3$, $4$, $5$, $6$, $7$, $8$, $9$, $10$, $12$, $13$, $14$, $15$, $18$, $20$, $30$, $60$,  and $61$.
\end{abstract}

\maketitle

\section{Introduction}
The famous Lagrange's four-square theorem says that every nonnegative integer can be written as a sum of at most four squares of integers. 
Motivated by this celebrated four-square theorem, 
Ramanujan provided a list of $55$ candidates of diagonal quaternary integral quadratic forms that represent all nonnegative integers. 
 Dickson pointed out that the diagonal quaternary quadratic form $x^2+2y^2+5z^2+5t^2$ in the Ramanujan's list does not represent the integer $15$, and confirmed that Ramanujan's assertion is correct for all the other $54$ quaternary forms (for details, see \cite{dick}).  
 Recently, Conway, Miller, and Schneeberger proved the so called ``15-theorem", which states that a positive definite integral quadratic form represents  all nonnegative integers if and only if it represents the integers $1,2,3,5,6,7,10,14,~\text{and}~15$, irrespective of its rank (for details, see \cite{b}).
As a natural generalization of the 15-theorem,  Bhargava and Hanke \cite{BH} proved the so-called ``290-theorem", which states that    
every positive-definite integer-valued quadratic form represents all nonnegative integers if and only if it represents 
$$
\begin{array} {ll}
1,\ 2, \ 3,\ 5,\ 6,\ 7,\ 10,\ 13,\ 14,\ 15,\ 17,\ 19,\ 21,\ 22,\ 23,\ 26,\ 29,\\
30,\ 31,\ 34,\ 35,\ 37,\ 42,\ 58,\ 93,\ 110,\ 145,\ 203, \ \  \text{and} \  \ 290.
\end{array}
$$
Here a quadratic form $f(x_1,x_2,\dots,x_n)=\sum_{1 \le i, j\le n} a_{ij} x_ix_j \ (a_{ij}=a_{ji})$ 
is called {\it integral} if $a_{ij}\in \mathbb Z$ for any $i,j$, and is called {\it integer-valued} if $a_{ii}\in \mathbb Z$ and $a_{ij}+a_{ji}\in\mathbb Z$ for any $i,j$.

For an integer $m\geq3$, a {\it polygonal number of order $m$} (or {\it an $m$-gonal number}) is defined by 
$$
P_m(x)= \frac{(m-2)x^2-(m-4)x}{2},
$$
for some nonnegative integer $x$.  If we admit that $x$ is a  negative integer, then $P_m(x)$ is called  
{\it a generalized polygonal number of order $m$} (or {\it a generalized $m$-gonal number}). 
Lagrange's four-square theorem implies that 
the equation 
$$
P_4(x)+P_4(y)+P_4(z)+P_4(t)=N
$$
has an integer solution $(x,y,z,t)\in\mathbb{Z}^4$, for any nonnegative integer $N$. 

As a natural generalization of Ramanujan's result on diagonal quaternary quadratic forms representing all nonnegative integers, it was proved in \cite{octagonal} and \cite{sun} that there are exactly $40$ quaternary sums of generalized octagonal numbers which represent all nonnegative integers. 
Furthermore,  the ``octagonal theorem of sixty" was proved in \cite{octagonal}, which states that  a sum $a_1 P_8(x_1)+a_2P_8(x_2)+\cdots+a_kP_8(x_k)$ of  generalized octagonal numbers represents all nonnegative integers if and only if it represents 
$$
1,\ 2, \ 3,\  4,\ 6,\  7,\ 9,\ 12,\ 13,\ 14,\ 18, \ \text{and} \ \  60,
$$ 
which might be considered this as a natural generalization of  the ``$15$-theorem".

In this article, we generalize the above theorem to the mixed sum of generalized $4$- and $8$-gonal numbers.
Let $\alpha_1,\dots,\alpha_u$, and $\beta_1,\dots,\beta_v$ be positive integers. The mixed sum
$$
\Phi=\alpha_1P_{4}(x_1)+\cdots+\alpha_uP_{4}(x_u)+\beta_1P_8(y_1)+\cdots+\beta_vP_{8}(y_v) 
$$
of generalized $4$- and $8$-gonal numbers is said to be {\it universal} if the diophantine equation $\Phi=N$ has an integer solution for any nonnegative integer $N$.  The aim of this article is to prove that there are exactly $1271$ {\it proper universal} mixed sums of generalized $4$- and $8$-gonal numbers. Here,  a universal sum $\Phi$ of generalized $4$- and $8$-gonal numbers is called {\it proper} if there does not exist a proper sum of $\Phi$ that is universal.  Furthermore, in Section 5, we prove that a mixed sum of generalized $4$- and $8$-gonal numbers is universal if and only if it represents the integers
$$
1,\ 2,\ 3, \ 4, \ 5, \ 6, \ 7, \ 8, \ 9, \ 10, \ 12, \ 13, \ 14, \ 15, \ 18, \ 20, \ 30, \ 60, \ \text{and} \ \ 61,
$$
which might also be considered as a generalization of the ``$15$-theorem".

Let 
$$
f(x_1,x_2,\dots,x_n)=\sum_{1 \le i, j\le n} a_{ij} x_ix_j \ (a_{ij}=a_{ji} \in \mathbb Z)
$$
 be a positive definite integral quadratic form. 
The corresponding symmetric matrix of $f$ is defined by $M_f=(a_{ij})$. 
For a diagonal quadratic form $f(x_1,x_2,\dots,x_n)=a_1x_1^2+a_2x_2^2+\cdots+a_nx_n^2$, we simply write 
$$
f=\langle a_1,a_2,\dots,a_n\rangle.
$$ 
For an integer $k$, we say $k$ is {\it represented by $f$} if the equation $f(x_1,x_2,\dots,x_n)=k$ has an integer solution. 
The genus of $f$, denoted by $\text{gen}(f)$, is the set of all quadratic forms that are isometric to $f$ over the $p$-adic integer ring $\mathbb{Z}_p$ for any prime $p$. 
The number of isometry classes in $\text{gen}(f)$ is called the class number of $f$, and denoted by $h(f)$.

Any unexplained notations and terminologies can be found in \cite{ki} or \cite{om}.

\section{General tools}
Let $\alpha_1,\dots,\alpha_u$ and $\beta_1,\dots,\beta_v$ be positive integers such that
$\alpha_1\leq\cdots\leq\alpha_u$ and $\beta_1\leq\cdots\leq\beta_v$. Let
\begin{equation} \label{defphi}
\Phi=\alpha_1P_{4}(x_1)+\cdots+\alpha_uP_{4}(x_u)+\beta_1P_8(y_1)+\cdots+\beta_vP_{8}(y_v) 
\end{equation}
be a mixed sum of generalized $4$- and $8$-gonal numbers. For simplicity, we will use the notation
$$
\Phi=[\![\alpha_1,\dots,\alpha_u,\boldsymbol{\beta_{1}},\dots,\boldsymbol{\beta_{v}}]\!].
$$
Recall that $\Phi$ is universal if the diophantine equation $\Phi=N$ has an integer solution $(x_1,\dots,x_u,y_1,\dots,y_v) \in \mathbb Z^{u+v}$ for any nonnegative integer $N$.  One may directly check that $\Phi$ is universal if and only if  the equation
$$
3\alpha_1x_1^2+\cdots+3\alpha_ux_u^2+\beta_1 (3y_{1}-1)^2+\cdots+\beta_v(3y_v-1)^2=
3N+\beta_1+\cdots+\beta_v
$$
has an integer solution for any nonnegative integer $N$. This is equivalent to the existence of an integer solution $(x_1,\dots,x_u,y_1,\dots,y_v)\in\mathbb{Z}^{u+v}$ of 
\begin{equation} \label{core}
\begin{array} {ll}
3\alpha_1x_1^2+\cdots+3\alpha_ux_u^2\!\!\!\!&+\beta_1y_1^2+\cdots+
\beta_vy_v^2\\
&=3N+\beta_1+\cdots+\beta_v \ \ \text{with} \  \  y_1\cdots y_v\not\equiv0\Mod3.
\end{array}
\end{equation}
Note that Equation \eqref{core} corresponds to the representation of a quadratic form with congruence condition. Since there are some method on determineing the existence of the representations of integers by a quadratic form, we try to find a suitable method on reducing Equation \eqref{core} to the representation of a quadratic form without congruence condition.
To explain our method, for example, assume that $\beta_1+\cdots+\beta_v \not \equiv 0 \Mod 3$. Since at least one of $\beta_i$ is not divisible by $3$, we may assume that, without loss of generality, $\beta_v \not \equiv  0 \Mod 3$. Then \eqref{core} has an integer solution if and only if 
$$
3\alpha_1x_1^2+\cdots+3\alpha_ux_u^2+\beta_1(y_v-3y_1)^2+\cdots+
\beta_{v-1} (y_v-3y_{v-1})^2+\beta_v y_v^2=3N+\beta_1+\cdots+\beta_v
$$
has an integer solution. Hence, in this case, the problem can be reduced to the representations of a quadratic form.  
Now, assume that   $\beta_1+\cdots+\beta_v \equiv 0 \Mod 3$.  
As a representative case, suppose that $u\geq3$, and the set of all integers that are represented by $3\alpha_1x_1^2+\cdots+3\alpha_ux_u^2$ is known.
In this case, we try to find sets $S_i$ of integers not divisible by $3$  such that for each $i=1,2,\dots,v$, there is a $y_i \in S_i$ depending on $N$ such that 
\begin{equation} \label{core2}
3\alpha_1x_1^2+\cdots+3\alpha_ux_u^2=3N+\beta_1+\cdots+\beta_v-(\beta_1y_1^2+\cdots
+\beta_vy_v^2) 
\end{equation}
has an integer solution $(x_1,\dots,x_u)\in\mathbb{Z}^{u}$. 
 
In \cite{poly}, \cite{regular}, and \cite{pentagonal}, we developed a method that determines whether or not integers in an arithmetic progression are represented by some particular ternary quadratic form. 
We briefly introduce this method for those who are unfamiliar with it. 
 
 Let $d$ be a positive integer and let $a$ be a nonnegative integer $(a\leq d)$. 
We define 
$$
S_{d,a}=\{dn+a \mid n \in \mathbb N \cup \{0\}\}.
$$
For integral ternary quadratic forms $f,g$, we define
$$
R(g,d,a)=\{v \in (\mathbb{Z}/d\mathbb{Z})^3 \mid vM_gv^t\equiv a \ (\text{mod }d) \}
$$
and
$$
R(f,g,d)=\{T\in M_3(\mathbb{Z}) \mid  T^tM_fT=d^2M_g \}.
$$
A coset (or, a vector in the coset) $v \in R(g,d,a)$ is said to be {\it good} with respect to $f,g,d, \text{ and }a$ if there is a $T\in R(f,g,d)$ such that $\frac1d \cdot vT^t \in \mathbb{Z}^3$.  
The set of all good vectors in $R(g,d,a)$ is denoted by $R_f(g,d,a)$.   
If  $R(g,d,a)=R_f(g,d,a)$, we write  
$$
g\prec_{d,a} f.
$$ 
If $g\prec_{d,a} f$, then by Lemma 2.2 of \cite{regular},  we have 
\begin{equation}\label{good}
S_{d,a}\cap Q(g) \subset Q(f).
\end{equation}

We denote the set $R(g,d,a)\setminus R_f(g,d,a)$ by $B_f(g,d,a)$.
In general, if $d$ is large, then the set  $B_f(g,d,a)$ has too many vectors to compute it exactly by hand. 
 A MAPLE based  computer program for computing this set  is available upon request to the authors. 

\begin{thm}\label{pme}
Under the same notation given above, assume that  $T\in M_3(\mathbb Z)$ satisfies the following conditions:
\begin{enumerate}
\item[(i)] $\frac1dT$ has an infinite order;
\item[(ii)] $T^tM_gT=d^2M_g$;
\item[(iii)]  for any vector $v \in \mathbb Z^3$ such that  $v\, (\text{mod }d)\in B_f(g,d,a)$, $\frac1d\cdot vT^t\in \mathbb Z^3$. 
\end{enumerate}
Then we have
$$
S_{d,a} \cap Q(g)\setminus \{g(z)\cdot s^2\mid s\in\mathbb{Z}\}\subset Q(f),
$$
where the vector $z$ is any integral primitive eigenvector of $T$. 
\end{thm}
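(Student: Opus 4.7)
My plan is to use an escalator-type iteration that extends the proof of inequality \eqref{good}. Fix $n\in S_{d,a}\cap Q(g)$ with $n\neq g(z)s^2$ for every $s\in\mathbb Z$, and pick $w_0\in\mathbb Z^3$ with $g(w_0)=n$. The reduction $w_0\pmod d$ lies in $R(g,d,a)=R_f(g,d,a)\sqcup B_f(g,d,a)$. If $w_0\pmod d\in R_f(g,d,a)$, I invoke the definition of ``good'' directly: there is some $T'\in R(f,g,d)$ with $u:=\tfrac1d w_0T'^t\in\mathbb Z^3$, and the routine computation $uM_fu^t=\tfrac1{d^2}w_0T'^tM_fT'w_0^t=w_0M_gw_0^t=n$ shows $n\in Q(f)$.

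If instead $w_0\pmod d\in B_f(g,d,a)$, the idea is to push $w_0$ forward by iterating $w\mapsto\tfrac1dwT^t$. Condition (iii) keeps the iterates $w_{k+1}=\tfrac1dw_kT^t$ integral as long as $w_k$ remains bad, and condition (ii) preserves the $g$-value, so $g(w_{k+1})=g(w_k)=n$ at every step. The moment an iterate's reduction lies in $R_f(g,d,a)$, the previous paragraph applies to that iterate and gives $n\in Q(f)$.

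The main obstacle is ruling out the scenario in which the iteration stays inside $B_f(g,d,a)$ forever. If it does, the sequence $\{w_k\}$ lives in the finite set $\{v\in\mathbb Z^3:g(v)=n\}$ (finite since $g$ is positive definite), so $w_k=w_l$ for some $k<l$ by pigeonhole; transposing, $w_k^t$ becomes a rational eigenvector of $(T/d)^{l-k}$ with eigenvalue $1$. To exploit the remaining hypotheses, I observe that (ii) makes $T/d$ a rational isometry of the positive definite form $g$, so all of its eigenvalues have absolute value $1$; then (i), combined with the classification of $3\times 3$ real orthogonal matrices, forces the spectrum to be $\{\varepsilon,e^{i\theta},e^{-i\theta}\}$ with $\varepsilon\in\{\pm 1\}$ and $\theta/\pi$ irrational. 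Consequently the $\mathbb Q$-rational eigenspace of $(T/d)^m$ for eigenvalue $1$ is contained in $\mathbb Q z$ for every positive $m$, so by primitivity of $z$ we get $w_k=sz$ for some $s\in\mathbb Z$, yielding $n=g(z)s^2$. This contradicts our standing hypothesis on $n$, so the iteration must eventually reach $R_f(g,d,a)$, completing the argument.
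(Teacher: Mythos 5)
Your argument is correct. The paper itself gives no proof of this theorem---it simply cites Theorem 2.3 of \cite{poly}---and your iteration $w_{k+1}=\tfrac1d w_kT^t$ (value-preserving by (ii), integral on bad vectors by (iii)), combined with finiteness of $\{v\in\mathbb Z^3: g(v)=n\}$ for the positive definite form $g$ and the spectral analysis showing that a nonzero integral fixed vector of a power of $T/d$ must be an integral multiple of the primitive eigenvector $z$, is exactly the standard mechanism behind that cited result.
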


\begin{proof}
See Theorem 2.3 of \cite{poly}.
\end{proof}

\section{Ternary mixed sums of generalized $4$- and $8$-gonal numbers} 
In this section, we determine all ternary universal mixed sums of generalized $4$- and $8$-gonal numbers.

Let $\Phi$ be a mixed sum of generalized $4$- and $8$-gonal numbers defined in \eqref{defphi}.  
For an integer $N$, if the diophantine equation $\Phi=N$ has an integer solution, then we say the  mixed sum $\Phi$ represents $N$, and we write  $N\rightarrow\Phi$. 
When the mixed sum $\Phi$ is not universal,   the least positive integer that is not represented by $\Phi$ is called the {\it truant} of $\Phi$, and is denoted by $t(\Phi)$. One may easily show that there does not exist a binary universal sum of generalized polygonal numbers. For the ternary case, we have the following:

\begin{thm}\label{ternary}
There are exactly $6$ ternary universal mixed sums of generalized $4$- and $8$-gonal numbers. In fact, they are
\begin{equation}\label{ternary uni}
[\![1,1,\boldsymbol1]\!], \ \      [\![1,1,\boldsymbol2]\!], \ \      [\![1,3,\boldsymbol1]\!], \ \      [\![1,\boldsymbol1,\boldsymbol1]\!],  \ \     [\![2,3,\boldsymbol1]\!], \ \  \text{and} \ \  [\![2,\boldsymbol1,\boldsymbol1]\!].     
\end{equation}
\end{thm}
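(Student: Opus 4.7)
The proof follows the escalator-tree method of Conway--Schneeberger--Bhargava, adapted to the mixed-sum setting via the reduction \eqref{core}.

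First, I enumerate the candidates by escalation. Given a partial mixed sum $\Phi_0$, any universal extension $\Phi$ must have its next coefficient at most $t(\Phi_0)$. Starting from the empty sum (truant $1$), I iteratively compute the truant of each surviving partial sum and list its legal one-variable extensions (respecting the orderings $\alpha_1 \leq \cdots \leq \alpha_u$ and $\beta_1 \leq \cdots \leq \beta_v$). Because truants grow quickly, the resulting tree of ternary candidates is finite and can be enumerated by a short direct search.

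Second, for every ternary candidate $\Phi$ not appearing in \eqref{ternary uni}, I exhibit an integer $N$ with $N \not\rightarrow \Phi$. Such a truant is located by a finite search bounded by the truants of the relevant size-two ancestors. For $\Phi = [\![\boldsymbol{\beta_1},\boldsymbol{\beta_2},\boldsymbol{\beta_3}]\!]$ the nonexistence of universal examples is already implicit in the octagonal theorem of sixty cited in the introduction, while for $\Phi = [\![\alpha_1,\alpha_2,\alpha_3]\!]$ it follows from the fact that no diagonal ternary integral quadratic form represents every nonnegative integer.

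Third, I must verify universality of each of the six candidates in \eqref{ternary uni}. Via \eqref{core}, each reduces to the assertion that a specific ternary diagonal quadratic form represents every integer in a fixed arithmetic progression modulo $3$: for instance, $[\![1,1,\boldsymbol{1}]\!]$ is universal if and only if $3x^2 + 3y^2 + z^2$ represents every integer $\equiv 1 \pmod{3}$, and $[\![1,\boldsymbol{1},\boldsymbol{1}]\!]$ if and only if $3x^2 + y^2 + z^2$ represents every integer $\equiv 2 \pmod{3}$. A direct check shows that in each of the six cases, the congruence requirement $y_i \not\equiv 0 \pmod{3}$ on the octagonal variables is automatically forced by the target residue modulo $3$, so no extra constraint remains. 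For those associated ternary forms of class number one, universality follows from the local--global principle; otherwise, I apply Theorem~\ref{pme}, constructing explicit matrices $T$ satisfying conditions (i)--(iii) in order to remove the bad residue set $B_f(g,d,a)$.

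The main obstacle is the third step, specifically handling those associated ternary forms of class number greater than one. The technical heart of the argument is to exhibit the matrices $T$ of Theorem~\ref{pme} and to verify that the associated genus exceptions lie outside the relevant arithmetic progression; this is precisely where the machinery developed in \cite{poly}, \cite{regular}, and \cite{pentagonal} is invoked.
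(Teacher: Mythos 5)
Your proposal follows essentially the same route as the paper: escalation from truants to a finite list of ternary candidates, exhibition of truants for the non-universal ones, and reduction via \eqref{core} to representation of an arithmetic progression modulo $3$ by an associated ternary form, where the congruence condition on the octagonal variables is automatically forced by the residue and universality follows from the class-number-one/local--global argument (the paper's Case (3-1)). The only difference is your hedge about invoking Theorem~\ref{pme} for class number greater than one, which turns out to be unnecessary at the ternary stage since all six associated forms have class number one; the machinery of Theorem~\ref{pme} is reserved by the authors for the quaternary cases.
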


\begin{proof}
Let $u,v$ be nonnegative integers such that $u+v=3$. 
For positive integers $\alpha_1,\dots,\alpha_u$ and $\beta_1,\dots,\beta_v$, assume that a ternary mixed sum
$$
\Phi=[\![\alpha_1,\dots,\alpha_u,\boldsymbol{\beta_1},\dots,\boldsymbol{\beta_v}]\!]
$$ 
of generalized $4$- and $8$-gonal numbers is universal. 
At first, we find all candidates of ternary mixed sum $\Phi$ of generalized $4$-and $8$-goanl numbers by using, so called, the escalation method.  Since $1\rightarrow\Phi$, $\alpha_1=1$ or $\beta_1=1$. If $\alpha_1=1$, then $1\leq\alpha_2\leq2$ or $1\leq\beta_1\leq2$, for $2\rightarrow\Phi$ and $t([\![1]\!])=2$.
If $\alpha_1 \ne 1$ and $\beta_1=1$, then $\alpha_1=2$ or $1\leq\beta_2\leq2$, for $2\rightarrow\Phi$ and $t([\![\boldsymbol1]\!])=2$.
Let $u_1,v_1$ be nonnegative integers such that $u_1+v_1=2$ and
$$
\Phi'=[\![\alpha_1,\dots,\alpha_{u_1},\boldsymbol{\beta_1},\dots,\boldsymbol{\beta_{v_1}}]\!]
$$ 
is one of the above proper sums of $\Phi$. 
Since $t(\Phi')$ is represented by $\Phi$ for each possible case,  at least one of  $\alpha_{u_1+1}$ or $\beta_{v_1+1}$ is less than equal to $t(\Phi')$ given in Table 3.1. 
From this, one may easily show that there are  $42$ candidates of ternary universal mixed sums of generalized $4$- and $8$-gonal numbers (see Table 3.1). Among them, $36$ ternary sums are, in fact, not universal (for truants of these sums, see Table (4.1)). Since the proofs of the universalities of the remaining $6$ candidates are quite similar to each other, we only provide the proof of the case (3-1).

\begin{table}[h]
\centering
\begin{footnotesize}
\renewcommand{\arraystretch}{1}\renewcommand{\tabcolsep}{2mm}
\begin{tabular}{|c|c|c|l|ll|ll|}\multicolumn{8}{c}{\textbf{Table 3.1.} Ternary universal mixed sums}\\
\hline
Case&$\Phi'$&$t(\Phi')$&\multicolumn{1}{c|}{C. A.}&\multicolumn{2}{c|}{Possible Candidates}&\multicolumn{2}{c|}{Universal Cases}\\
\hline
(3-1)&$[\![1,1]\!]$&$3$&~&1$\leq\alpha_3\leq3$,&$1\leq\beta_1\leq3$&~&$\beta_1=1,2$\\ 
(3-2)&$[\![1,2]\!]$&$5$&~&$2\leq\alpha_3\leq5$,&$1\leq\beta_1\leq5$&~\!\!&~\!\!\\ 
(3-3)&$[\![1,\boldsymbol{1}]\!]$&$3$&$\alpha_2=1,2$&$\alpha_2=3$,&$1\leq\beta_2\leq3$&$\alpha_2=3$,&$\beta_2=1$\\ 
(3-4)&$[\![1,\boldsymbol{2}]\!]$&$5$&$\alpha_2=1,2$&$3\leq\alpha_2\leq5$,&$2\leq\beta_2\leq5$&~&~\\ 
(3-5)&$[\![2,\boldsymbol{1}]\!]$&$4$&~&$2\leq\alpha_2\leq4$,&$1\leq\beta_2\leq4$&$\alpha_2=3$,&$\beta_2=1$\\ 
(3-6)&$[\![\boldsymbol{1},\boldsymbol{1}]\!]$&$3$&$\alpha_1=1,2$&$\alpha_1=3$,&$1\leq\beta_3\leq3$&~&\\ 
(3-7)&$[\![\boldsymbol{1},\boldsymbol{2}]\!]$&$4$&$\alpha_1=1,2$&$3\leq\alpha_1\leq4$,&$2\leq\beta_4\leq4$&~&\\ 
\hline
\multicolumn{8}{r}{C. A.= Considered Already}
\end{tabular}
\end{footnotesize}
\end{table}

\noindent\textbf{Case (3-1)} $(\alpha_1,\alpha_2)=(1,1)$.
It suffices to show that for $\beta_1\in\{1,2\}$, the equation
$$
3x^2+3y^2+\beta_1 z^2=3N+\beta_1
$$ 
has an integer solution $(x,y,z)\in\mathbb{Z}^3$ such that $z\not\equiv0\Mod3$.
Since $h(\langle\beta_1,3,3\rangle)=1$, one may easily show that a positive integer $m$ congruent to  $\beta_1$ modulo $3$ is represented by $\langle\beta_1,3,3\rangle$. 
This competes the proof. 
\end{proof}

\section{Quaternary mixed sums of generalized $4$- and $8$-gonal numbers}
In \cite{dick}, Dickson proved that there are exactly $54$ quaternary universal sums of squares. Note that any square of an integer is a  generalized $4$-gonal numbers. 
Recently, it was proved  in \cite{octagonal} and \cite{sun} that there are exactly $40$ quaternary universal sum of generalized $8$-gonal numbers. 
In this section, we determine all quaternary proper universal mixed sums of generalized $4$- and $8$-gonal numbers including the above $94$ quaternary universal sums.

Recall that a mixed sum $\Phi=[\![\alpha_1,\dots,\alpha_u,\boldsymbol{\beta_1},\dots,\boldsymbol{\beta_v}]\!]$ of generalized $4$-and $8$-goanl numbers is universal if and only if the equation 
\begin{equation}\label{core'}
\begin{array} {ll}
3\alpha_1x_1^2+\cdots+3\alpha_ux_u^2\!\!\!\!&+\beta_1y_1^2+\cdots+
\beta_vy_v^2\\
&=3N+\beta_1+\cdots+\beta_v \ \ \text{with} \  \  y_1\cdots y_v\not\equiv0\Mod3
\end{array}
\end{equation}
has an integer solution $(x_1,\dots,x_u,y_1,\dots,y_v)\in\mathbb{Z}^{u+v}$ for any nonnegative integer $N$.
Let 
$$
E=\{1\leq n< \beta_1+\cdots+\beta_v\mid n\equiv \beta_1+\cdots+\beta_v\Mod3\}.
$$ 
For each $n\in E$, let $\nu(n)$ be the positive integer such that
$$
4^{\nu(n)-1}\cdot n<\beta_1+\cdots+\beta_v\leq 4^{\nu(n)}\cdot n.
$$

\begin{lem}\label{reduction}
Under the same notations given above, assume that 
\begin{equation}\label{assumption1}
\begin{array} {ll}
3\alpha_1x_1^2+\cdots+3\alpha_ux_u^2\!\!\!\!&+\beta_1y_1^2+\cdots+
\beta_vy_v^2\\
&=4^{\nu(n)}\cdot n \ \ \text{with} \  \  y_1\cdots y_v\not\equiv0\Mod3
\end{array}
\end{equation}
has an integer solution for any $n\in E$.
Then the mixed sum $\Phi$ is universal if and only if the equation
$$
\begin{array} {ll}
3\alpha_1x_1^2+\cdots+3\alpha_ux_u^2\!\!\!\!&+\beta_1y_1^2+\cdots+
\beta_vy_v^2\\
&=3N+\beta_1+\cdots+\beta_v \ \ \text{with} \  \  y_1\cdots y_v\not\equiv0\Mod3
\end{array}
$$
has an integer solution for any nonnegative integer $N$ such that $3N+\beta_1+\cdots+\beta_v\not\equiv0\Mod4$.
\end{lem}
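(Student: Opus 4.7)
The ``only if'' direction is immediate: if $\Phi$ is universal then the equation has a solution for \emph{every} nonnegative $N$, in particular for those with $3N+B\not\equiv 0\pmod 4$, where I write $B=\beta_1+\cdots+\beta_v$. My plan is therefore to establish the converse by a $2$-adic descent that reduces an arbitrary $K=3N+B$ either to a value not divisible by $4$ (handled by the ``if'' hypothesis) or to one of the finitely many exceptional values $4^{\nu(n)}n$ with $n\in E$ (handled by the standing assumption \eqref{assumption1}).

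Given $K=3N+B$ with $N\geq 0$, factor out the largest power of $4$ and write $K=4^tk$ with $4\nmid k$. Because $4\equiv 1\pmod 3$, we have $k\equiv K\equiv B\pmod 3$, and $k\geq 1$. The crucial observation is a scaling principle: if $(\mathbf{x}_0,\mathbf{y}_0)\in\mathbb Z^{u+v}$ represents $k$ with every component of $\mathbf{y}_0$ nonzero modulo $3$, then $(2^s\mathbf{x}_0,2^s\mathbf{y}_0)$ represents $4^sk$ and preserves the mod-$3$ side condition since $\gcd(2,3)=1$. So it suffices to produce a representation of $4^sk$ (satisfying the congruence constraint) for some $0\leq s\leq t$ and then rescale by $2^{t-s}$.

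I split into two cases according to the size of $k$. If $k\geq B$, then $k=3N'+B$ for some $N'\geq 0$; since $4\nmid k$, the ``if'' hypothesis produces a representation of $k$ with every $y_j$ nonzero mod $3$, and scaling by $2^t$ gives the desired representation of $K$. If instead $k<B$, the conditions $k\geq 1$ and $k\equiv B\pmod 3$ place $k$ in the set $E$, so the standing assumption \eqref{assumption1} furnishes a representation of $4^{\nu(k)}k$. To rescale this up to $K=4^tk$, I need $t\geq\nu(k)$: from $K\geq B>4^{\nu(k)-1}k$ and $K=4^tk$ we get $4^t>4^{\nu(k)-1}$, hence $t\geq\nu(k)$; then scaling by $2^{t-\nu(k)}$ completes the argument.

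I do not anticipate real obstacles; the entire lemma is essentially a packaging of the scaling $(\mathbf{x},\mathbf{y})\mapsto(2^s\mathbf{x},2^s\mathbf{y})$, which preserves both the quadratic form (up to the factor $4^s$) and the mod-$3$ constraint. The one point requiring a moment of care is the inequality $t\geq\nu(k)$, and this is precisely the reason $\nu(n)$ was defined by $4^{\nu(n)-1}n<B\leq 4^{\nu(n)}n$: it is the minimal exponent for which $4^{\nu(n)}n\geq B$, and hence the minimal one that can appear as the $4$-adic part of some admissible $K=3N+B$.
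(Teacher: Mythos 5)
Your proof is correct and follows essentially the same route as the paper's: factor $3N+B=4^tk$ with $4\nmid k$, handle $k\geq B$ via the hypothesis and $k<B$ via the assumption on $E$ (checking $t\geq\nu(k)$ exactly as you do), then scale the solution by a power of $2$, which preserves the mod-$3$ condition. The only difference is that you make the scaling step explicit where the paper leaves it implicit.
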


\begin{proof}
Note that ``only if" part is trivial. To prove ``if" part, let $N$ be any nonnegative integer. 
Let $s$ be a positive integer and $n$ be a positive integer not divisible by $4$ such that $3N+\beta_1+\cdots+\beta_v=4^s\cdot n$. 
If $n\geq\beta_1+\cdots\beta_v$, then there is a nonnegative integer $N_0$ such that $3N_0+\beta_1+\cdots+\beta_v=n$. Then 
Equation \eqref{core'} has an integer solution for this $N_0$ by assumption, and therefore Equation \eqref{core'} has also an integer solution for $N$. If $n<\beta_1+\cdots+\beta_v$, then $n\in E$ and $\nu(n)\leq s$ from the choice of $\nu(n)$. Since Equation \eqref{assumption1} has an integer solution for $4^{\nu(n)}\cdot n$, Equation \eqref{core'} has an integer solution for $N$. 
\end{proof}

\begin{thm}\label{quaternary}
There are exactly $547$ proper quaternary universal mixed sums of generalized $4$- and $8$-gonal numbers {\rm (}for the list of them, see Table 4.1{\rm)}.
\end{thm}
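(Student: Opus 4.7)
The plan is to apply the escalation method of Theorem \ref{ternary} one step further. Any proper quaternary universal mixed sum $\Phi$ must contain, as a proper subsum, one of the non-universal ternary mixed sums $\Phi'$ appearing in Table 3.1, and the coefficient of the fourth term adjoined to $\Phi'$ must be at most $t(\Phi')$, since the truant $t(\Phi')$ itself has to be represented by $\Phi$. This mechanical step produces a finite (though large) list of quaternary candidates. The candidates split into two groups: for those that are not universal, one exhibits the truant directly and places them in the non-universal column of Table 4.1; the remaining candidates must be shown to be universal, and it is here that the work lies.

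For each such candidate, the universality proof is driven by Lemma \ref{reduction}: it suffices to check by hand that \eqref{assumption1} is solvable for each of the few residues $n\in E$, and then to prove that every nonnegative integer $N$ with $3N+\beta_1+\cdots+\beta_v\not\equiv 0\Mod 4$ admits a representation satisfying \eqref{core'}. To handle the main bulk, I would fix one of the constrained variables $y_i$ modulo $3$ so as to absorb the non-divisibility condition, thereby reducing \eqref{core'} to the representation of an arithmetic progression of integers by a ternary quadratic form $g$ derived from $\Phi$. Whenever $g$ (or a suitable class in its genus) has class number one and a local analysis already picks out the desired residue class, the conclusion follows immediately. For the remaining harder cases, I would invoke Theorem \ref{pme} by constructing an explicit integer matrix $T$ with $T^tM_gT=d^2M_g$, of infinite order after scaling by $1/d$, and compatible with the bad-vector set $B_f(g,d,a)$; this exhibits a ternary subform of $\Phi$ whose representations cover every relevant integer save finitely many square-multiples of a single value $g(z)$, and these exceptions can then be dispatched by a separate direct check or by choosing a different assignment of the auxiliary $y_j$'s. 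Counting the surviving universal candidates across all branches of the escalation tree should yield the number $547$.

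The chief obstacle is the sheer bookkeeping: several hundred candidate quaternary sums must each be classified, and every universality proof requires an explicit choice of auxiliary variable, identification of the relevant ternary form $g$, and, in the harder cases, construction of the matrix $T$ together with verification of conditions (i)–(iii) of Theorem \ref{pme}. This is necessarily computer-assisted, as already signalled by the remark in Section 2 that a MAPLE program is available for computing $B_f(g,d,a)$; structurally, however, all of the individual arguments will follow the same template, so only one or two representative cases need to be written out in detail while the rest are collected in Table 4.1.
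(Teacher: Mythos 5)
Your proposal follows essentially the same route as the paper: escalate from the non-universal ternary sums of Table 3.1 using the truant bound to obtain the finite candidate list, eliminate the non-universal candidates by exhibiting truants, and prove universality of the rest via Lemma \ref{reduction}, reduction to representations of arithmetic progressions by ternary forms (class number one where possible, Theorem \ref{pme} with an explicit matrix $T$ otherwise), writing out only representative cases. The paper additionally invokes the $15$-theorem and the octagonal $60$-theorem to dispose of the pure cases $u=0$ or $v=0$, and in a few hard cases supplements the template with the Minkowski--Siegel mass formula and Jones's theorem on binary forms, but these are refinements within the same framework you describe.
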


\begin{proof}
Let $u,v$ be nonnegative integers such that $u+v=4$. 
For positive integers $\alpha_1,\dots,\alpha_u$ and $\beta_1,\dots,\beta_v$, assume that a quaternary mixed sum
$$
\Phi=[\![\alpha_1,\dots,\alpha_u,\boldsymbol{\beta_1},\dots,\boldsymbol{\beta_v}]\!]
$$ 
  is proper universal. 
Then by theorem \ref{ternary}, there are nonnegative integers $u_1, v_1$ such that $u_1+v_1=3$ and the proper sum 
$$
\Phi'=[\![\alpha_1,\dots,\alpha_{u_1},\boldsymbol{\beta_1},\dots,\boldsymbol{\beta_{v_1}}]\!]
$$ 
of $\Phi$ is one of the sums in Table 4.1 which is not universal (see also Table 3.1).  
For each case, from the assumption that $\Phi$ is universal,  we know that at least one of $\alpha_{u_1+1}$ or $\beta_{v_1+1}$ is less than or equal to $t(\Phi')$ given in Table 4.1. Therefore, we have exactly $564$ candidates of proper quaternary universal mixed sums as in Table 4.1.

Now, we show that there are exactly $547$ proper quaternary  universal mixed sums among those candidates. The truants of the remaining $17$ quaternary sums are given in Table 5.1. If $u=0$ ($v=0$), then the universality of $\Phi$ can directly be proved by the  ``$60$-theorem" in \cite{octagonal} (``$15$-theorem" of \cite{b}, respectively). 
Therefore, it suffices to consider the cases when  $uv\not\eq0$.
Since the proofs  are quite similar to each other, we only provide, as representative cases,  the proofs of Cases
$$
 \text{(4-1)}, \ \text{(4-4)}, \  \text{(4-11)}, \  \text{(4-15)}, \ \text{(4-18)}, \ \text{(4-31)}, \ \text{and} \  \text{(4-35)}.
$$
In fact, the proof of Case (4-15) is one of the most complicate cases. 

To explain how to read Table 4.1, let us consider the case (4-11).  Note that $t([\![1,2,\boldsymbol{3}]\!])=10$. Hence all possible candidates of quaternary universal sums containing $[\![1,2,\boldsymbol{3}]\!]$ are 
$$
[\![1,2,\alpha_3,\boldsymbol{3}]\!] \ \ \text{for $2\le \alpha_3\le 10$} \ \ \text{or} \ \  [\![1,2,\boldsymbol{3},
\boldsymbol{\beta_2}]\!] \ \ \text{for $3\le \beta_2\le 10$}.
$$ 
Among these quaternary sums, the cases when $2\le \alpha_3\le5$  are already considered. For example, the case when $\alpha_3=2$ is considered in Case (4-5).   All the other quaternary sums except $\beta_2=3$ is, in fact,   universal. For the exceptional case, note that $t([\![1,2,\boldsymbol{3},\boldsymbol{3}]\!])=13$, which is considered in Section 5 in more detail.   

\begin{table}[h]
\centering
\begin{footnotesize}
\renewcommand{\arraystretch}{0.9}\renewcommand{\tabcolsep}{1mm}
\begin{tabular}{|c|c|c|ll|ll|ll|}\multicolumn{9}{c}{\textbf{Table 4.1.} Quaternary universal mixed sums}\\
\hline
\!\!Case&$\Phi'$&$t(\Phi')$\!\!&\multicolumn{2}{c|}{Considered Already} &\multicolumn{4}{c|}{Universal Cases} \\
\hline
(4-1)&$[\![1,1,1]\!]$&$7$&~&$\beta_1=1,2$&$1\leq\alpha_4\leq7$,&$3\leq\beta_1\leq7$&~&~\\
(4-2)&$[\![1,1,2]\!]$&$14$&~&$\beta_1=1,2$&$2\leq\alpha_4\leq14$,&$3\leq\beta_1\leq14$&~&~\\
(4-3)&$[\![1,1,3]\!]$&$6$&~&$\beta_1=1,2$&$3\leq\alpha_4\leq6$,&$3\leq\beta_1\leq6$&~&~\\
(4-4)&$[\![1,1,\boldsymbol{3}]\!]$&$6$&$1\leq\alpha_3\leq3$&~&$4\leq\alpha_3\leq6$,&$3\leq\beta_2\leq6$&~&~\\
(4-5)&$[\![1,2,2]\!]$&$7$&~&~&$2\leq\alpha_4\leq7$,&$1\leq\beta_1\leq7$&~&~\\
(4-6)&$[\![1,2,3]\!]$&$10$&~&$\beta_1=1$&$3\leq\alpha_4\leq10$,&$2\leq\beta_1\leq10$&~&~\\
(4-7)&$[\![1,2,4]\!]$&$14$&~&~&$4\leq\alpha_4\leq14$,&$1\leq\beta_1\leq14$&~&~\\
(4-8)&$[\![1,2,5]\!]$&$10$&~&~&$5\leq\alpha_4\leq10$,&$1\leq\beta_1\leq10$&$\alpha_4\not\eq5$,&$\beta_1\not\eq5$\\
(4-9)&$[\![1,2,\boldsymbol{1}]\!]$&$15$&$2\leq\alpha_3\leq6$,&$\beta_2=1$&$6\leq\alpha_3\leq15$,&$2\leq\beta_2\leq15$&~&$\beta_2\not\eq14$\\
(4-10)&$[\![1,2,\boldsymbol{2}]\!]$&$7$&$2\leq\alpha_3\leq5$&~&$6\leq\alpha_3\leq7$,&$2\leq\beta_2\leq7$&~&$\beta_2\not\eq7$\\
(4-11)&$[\![1,2,\boldsymbol{3}]\!]$&$10$&$2\leq\alpha_3\leq5$&~&$6\leq\alpha_3\leq10$,&$3\leq\beta_2\leq10$&~&$\beta_2\not\eq3$\\
(4-12)&$[\![1,2,\boldsymbol{4}]\!]$&$14$&$2\leq\alpha_3\leq5$&~&$6\leq\alpha_3\leq14$,&$4\leq\beta_2\leq14$&~&$\beta_2\not\eq14$\\
(4-13)&$[\![1,2,\boldsymbol{5}]\!]$&$10$&$2\leq\alpha_3\leq5$&~&$6\leq\alpha_3\leq10$,&$5\leq\beta_2\leq10$&$\alpha_3\not\eq10$,&$\beta_2\not\eq5,10$\\
(4-14)&$[\![1,\boldsymbol{1},\boldsymbol{2}]\!]$&$13$&$1\leq\alpha_2\leq3$&~&$4\leq\alpha_2\leq13$,&$2\leq\beta_3\leq13$&~&~\\
(4-15)&$[\![1,\boldsymbol{1},\boldsymbol{3}]\!]$&$18$&$1\leq\alpha_2\leq3$&~&$4\leq\alpha_2\leq18$,&$3\leq\beta_3\leq18$&~&~\\
(4-16)&$[\![1,3,\boldsymbol{2}]\!]$&$8$&~&~&$3\leq\alpha_3\leq8$,&$2\leq\beta_2\leq8$&~&~\\
(4-17)&$[\![1,4,\boldsymbol{2}]\!]$&$12$&~&~&$4\leq\alpha_3\leq12$,&$2\leq\beta_2\leq12$&~&~\\
(4-18)&$[\![1,5,\boldsymbol{2}]\!]$&$12$&~&~&$5\leq\alpha_3\leq12$,&$2\leq\beta_2\leq12$&~&~\\
(4-19)&$[\![1,\boldsymbol{2},\boldsymbol{2}]\!]$&$7$&$1\leq\alpha_2\leq5$&~&$6\leq\alpha_2\leq7$,&$2\leq\beta_3\leq7$&~&~\\
(4-20)&$[\![1,\boldsymbol{2},\boldsymbol{3}]\!]$&$8$&$1\leq\alpha_2\leq5$&~&$6\leq\alpha_2\leq8$,&$3\leq\beta_3\leq8$&~&~\\
(4-21)&$[\![1,\boldsymbol{2},\boldsymbol{4}]\!]$&$12$&$1\leq\alpha_2\leq5$&~&$6\leq\alpha_2\leq12$,&$4\leq\beta_3\leq12$&~&~\\
(4-22)&$[\![1,\boldsymbol{2},\boldsymbol{5}]\!]$&$12$&$1\leq\alpha_2\leq5$&~&$6\leq\alpha_2\leq12$,&$5\leq\beta_3\leq12$&~&~\\
(4-23)&$[\![2,2,\boldsymbol{1}]\!]$&$6$&$\alpha_3=3$,&$\beta_2=1$&$2\leq\alpha_3\leq6$,&$2\leq\beta_2\leq6$&~&~\\
(4-24)&$[\![2,4,\boldsymbol{1}]\!]$&$15$&~&$\beta_2=1$&$4\leq\alpha_3\leq15$,&$2\leq\beta_2\leq15$&~&$\beta_2\not\eq14$\\
(4-25)&$[\![2,\boldsymbol{1},\boldsymbol{2}]\!]$&$6$&$2\leq\alpha_2\leq4$&~&$5\leq\alpha_2\leq6$,&$2\leq\beta_3\leq6$&~&~\\
(4-26)&$[\![2,\boldsymbol{1},\boldsymbol{3}]\!]$&$14$&$2\leq\alpha_2\leq4$&~&$5\leq\alpha_2\leq14$,&$3\leq\beta_3\leq14$&~&$\beta_3\not\eq14$\\
(4-27)&$[\![2,\boldsymbol{1},\boldsymbol{4}]\!]$&$15$&$2\leq\alpha_2\leq4$&~&$5\leq\alpha_2\leq15$,&$4\leq\beta_3\leq15$&~&~\\
(4-28)&$[\![3,\boldsymbol{1},\boldsymbol{1}]\!]$&$7$&~&~&$3\leq\alpha_2\leq7$,&$1\leq\beta_3\leq7$&~&$\beta_3\not\eq7$\\
(4-29)&$[\![\boldsymbol{1},\boldsymbol{1},\boldsymbol{1}]\!]$&$4$&$1\leq\alpha_1\leq3$&~&$\alpha_1=4$,&$1\leq\beta_4\leq4$&~&~\\
(4-30)&$[\![\boldsymbol{1},\boldsymbol{1},\boldsymbol{2}]\!]$&$14$&$1\leq\alpha_1\leq3$&~&$4\leq\alpha_1\leq14$,&$2\leq\beta_4\leq14$&~&$\beta_4\not\eq14$\\
(4-31)&$[\![\boldsymbol{1},\boldsymbol{1},\boldsymbol{3}]\!]$&$7$&$1\leq\alpha_1\leq3$&~&$4\leq\alpha_1\leq7$,&$3\leq\beta_4\leq7$&$\alpha_1\not\eq7$,&$\beta_4\not\eq4,7$\\
(4-32)&$[\![3,\boldsymbol{1},\boldsymbol{2}]\!]$&$9$&~&~&$3\leq\alpha_2\leq9$,&$2\leq\beta_3\leq9$&~&~\\
(4-33)&$[\![4,\boldsymbol{1},\boldsymbol{2}]\!]$&$13$&~&~&$4\leq\alpha_2\leq13$,&$2\leq\beta_3\leq13$&~&~\\
(4-34)&$[\![\boldsymbol{1},\boldsymbol{2},\boldsymbol{2}]\!]$&$6$&$1\leq\alpha_1\leq4$&~&$5\leq\alpha_1\leq6$,&$2\leq\beta_4\leq6$&~&~\\
(4-35)&$[\![\boldsymbol{1},\boldsymbol{2},\boldsymbol{3}]\!]$&$9$&$1\leq\alpha_1\leq4$&~&$5\leq\alpha_1\leq9$,&$3\leq\beta_4\leq9$&~&$\beta_4\not\eq3$\\
(4-36)&$[\![\boldsymbol{1},\boldsymbol{2},\boldsymbol{4}]\!]$&$13$&$1\leq\alpha_1\leq4$&~&$5\leq\alpha_1\leq13$,&$4\leq\beta_4\leq13$&~&~\\
\hline 
\end{tabular}
\end{footnotesize}
\end{table}

\vskip0.5pc
\noindent\textbf{Case (4-1)} $(\alpha_1,\alpha_2,\alpha_3)=(1,1,1)$.
 It is enough to show that for any $\beta_1$ such that $3\leq\beta_1\leq7$, the equation
$$
3x^2+3y^2+3z^2+\beta_1 t^2=3N+\beta_1
$$ 
has an integer solution $(x,y,z,t)\in\mathbb{Z}^4$ such that $t\not\equiv0 \Mod 3$ for any nonnegative integer $N$.

Since the proofs are quite similar to each other, we only provide the proof of the case $\beta_1=3$.
By Lemma \ref{reduction}, we may assume that $3N+3\not\equiv0\Mod4$.
If $1\leq N\leq 15$, then one may directly check that  the equation
$$
x^2+y^2+z^2+t^2=N+1
$$ 
has an integer solution $(x,y,z,t)\in\mathbb{Z}^4$ such that $t\not\equiv0\Mod 3$. 
Therefore, we assume that $N\geq 16$. 
Note that an integer is represented by $\langle1,1,1\rangle$ if and only if it is not of the form $4^{l}(8k+7)$ for any nonnegative integers $l,k$.
Hence one may easily show that there is an integer $d\in\{1,4\}$ such that $N+1-d^2>0$, and furthermore $N+1-d^2$ is represented by $\langle1,1,1\rangle$. 
Therefore, the equation
$$
x^2+y^2+z^2+t^2=N+1
$$ 
has an integer solution $(x,y,z,t)\in\mathbb{Z}^4$ such that $t\not\equiv0 \Mod 3$ for any nonnegative integer $N$.
This completes the proof.

\vskip0.5pc
\noindent\textbf{Case (4-4)} $(\alpha_1,\alpha_2,\beta_1)=(1,1,3)$. 
 It is enough to show that for any $\alpha_3(\beta_2)$ such that $4\leq\alpha_3\leq6$ $(3\leq\beta_2\leq6)$, the equation
$$
x^2+y^2+\alpha_3 z^2+t^2=N+1~(3x^2+3y^2+3z^2+\beta_2 t^2=3N+3+\beta_2)
$$ 
has an integer solution $(x,y,z,t)\in\mathbb{Z}^4$ such that $t\not\equiv0\Mod 3$$(zt\not\equiv0\Mod3$, respectively$)$ for any nonnegative integer $N$. 

Since the proofs are quite similar to each other, we only provide the proof of the case $\alpha_3=4$.
By Lemma \ref{reduction}, we may assume that $N+1\not\equiv0\Mod4$.
If $N\leq3$, then one may directly check that 
$$
x^2+y^2+4z^2+t^2=N+1
$$ 
has an integer solution $(x,y,z,t)\in\mathbb{Z}^4$ such that $t\not\equiv0\Mod 3$. 
Therefore, we assume that $N\geq 4$. 
Note that a positive integer $m$ is represented by $\langle1,1,1\rangle$ if and only if it is not of the form $4^l(8k+7)$ for any nonnegative integers $l,k$.  Furthermore, if $m$ is represented by $\langle 1,1,1\rangle$, then there is an integer solution $(x,y,t) \in \mathbb Z^3$ such that $x^2+y^2+t^2=m$ and $\text{gcd}(x,y,t,p)=1$ for any odd prime $p$. This can be proved by using a slightly modified version of 102:5 of \cite{om}. 
   Hence, there is an integer $c\in\{0,1\}$ such that the diophantine equation
$$
x^2+y^2+t^2=N+1-4c^2 \quad \text{with} \quad t \not \equiv 0 \Mod 3
$$
has an integer solution $(x,y,t) \in \mathbb Z^3$.
This completes the proof of Case (4-4).  

\vskip0.5pc
\noindent\textbf{Case (4-11)} $(\alpha_1,\alpha_2,\beta_1)=(1,2,3)$.
It suffices to show that for any $\alpha_3(\beta_2)$ such that $6\leq\alpha_3\leq10$ $(4\leq\beta_2\leq10)$, the equation 
$$
x^2+2y^2+\alpha_3 z^2+t^2=N+1~(3x^2+6y^2+3z^2+\beta_2t^2=3N+3+\beta_2)
$$ 
has an integer solution $(x,y,z,t)\in\mathbb{Z}^4$ such that $t\not\equiv0\Mod3$$(zt\not\equiv0\Mod3$, respectively) for any nonnegative integer $N$. 

Assume $\alpha_3=7$. 
By Lemma \ref{reduction}, we may assume that $N+1\not\equiv0\Mod4$.
If $1\leq N\leq111$ then one may directly check that such an integer solution always exists.
Therefore, we may assume that $N\geq 112$. 
Note that an integer is represented by $\langle1,1,2\rangle$ if and only if it is not of the form $2^{2l+1}(8k+7)$ for any nonnegative integers $l,k$. 
Hence, one may easily show that there is an integer $c\in\{0,1,2,3,4\}$ such that $N+1-7c^2>0$, $N+1-7c^2\equiv0,1\Mod3$, and furthermore, $N+1-7c^2$ is represented by $\langle1,1,2\rangle$. 
Therefore, the equation 
$$
x^2+2y^2+t^2=N+1-7c^2
$$ 
has an integer solution $(x,y,t)=(a,b,d)\in\mathbb{Z}^3$ such that $a\not\equiv0\Mod3$ or $d\not\equiv0\Mod3$ or $a\equiv b\equiv d\equiv 0\Mod 3$. 
 Assume that $a\equiv b\equiv d\equiv 0\Mod 3$. 
We may assume that $a^2+2b^2\not\eq0$. 
By Theorem 9 of \cite{Jones} (see also \cite{Jagy}, and for more generalization, see \cite{oh}), there are integers $e,f$ such that $e^2+2f^2=a^2+2b^2$ and $ef\not\equiv0\Mod3$. 
Therefore, the equation 
$$
x^2+2y^2+7z^2+t^2=N+1
$$ 
has an integer solution $(x,y,z,t)\in\mathbb{Z}^4$ such that $t\not\equiv0\Mod3$ for any nonnegative integer $N$. 

Assume $\beta_2=9$. It suffices to show that the equation 
$$
x^2+2y^2+z^2+3t^2=N+4
$$ 
has an integer solution $(x,y,z,t)\in\mathbb{Z}^4$ such that $zt\not\equiv0\Mod3$. 
By Lemma \ref{reduction}, we may assume that $N+4\not\equiv0\Mod4$.
If $N\leq 165$, then one may directly check that 
$$
x^2+2y^2+z^2+3t^2=N+4
$$ 
has an integer solution $(x,y,z,t)\in\mathbb{Z}^4$ such that $zt\not\equiv0\Mod3$.
Therefore, we assume $N\geq166$. If $N+4\equiv0,1\Mod3$, then the proof is quite similar to the proof of the case $\alpha_3=7$. 
Assume $N+4\equiv2\Mod3$. 
Since $h(\langle1,2,3\rangle)=1$, one may easily show that a positive integer which is not of the form $2^{2l+1}(8k+5)$ for any nonnegative integers $l,k$ is represented by $\langle1,2,3\rangle$.
Hence, there is an integer $c\in\{1,2,4,5,7,13\}$ such that  $N+4-c^2>0$, $N+4-c^2\equiv4\Mod6$, and furthermore, $N+4-c^2$ is represented by $\langle1,2,3\rangle$. 
Therefore, there are integers $a,b,d$ such that $a^2+2b^2+3d^2=N+4-c^2$ with $a\equiv d\Mod2$.  If $d\not\equiv0\Mod3$, then we are done.
Assume $d\equiv0\Mod3$. Note that $\left({\frac{a+3d}{2}}\right)^2+3\left({\frac{a-d}{2}}\right)^2=a^2+3d^2$ 
and $\left(\frac{a+3d}{2}\right)\cdot\left(\frac{a-d}{2}\right)\not\equiv0\Mod3$. 
Therefore, the equation 
$$
x^2+2y^2+z^2+3t^2=N+4
$$ 
has an integer solution $(x,y,z,t)\in\mathbb{Z}^4$ such that $zt\not\equiv0\Mod3$. 

The proofs of all the other cases are quite similar to those of the case $\alpha_3=7$ or Case (4-1).

Assume that the proper sum $\Phi'$ of $\Phi$ is  one of the sums in Table 4.2. Then the proofs are quite similar to those of Cases (4-1) or (4-11).

\begin{table}[h]
\centering
\begin{footnotesize}
\renewcommand{\arraystretch}{1}\renewcommand{\tabcolsep}{3mm}
\begin{tabular}{|c|c||c|c||c|c|}\multicolumn{6}{c}{\textbf{Table 4.2.} All cases whose proofs are similar to (4-1) or (4-11)}\\
\hline
Case&$\Phi'$&Case&$\Phi'$&Case&$\Phi'$\\
\hline
(4-2)&$[\![1,1,2]\!]$&(4-10)&$[\![1,2,\boldsymbol{2}]\!]$&(4-27)&$[\![2,\boldsymbol{1},\boldsymbol{4}]\!]$\\   
(4-3)&$[\![1,1,3]\!]$&(4-14)&$[\![1,\boldsymbol{1},\boldsymbol{2}]\!]$&(4-28)&$[\![3,\boldsymbol{1},\boldsymbol{1}]\!]$\\
(4-5)&$[\![1,2,2]\!]$&(4-16)&$[\![1,3,\boldsymbol{2}]\!]$&(4-29)&$[\![\boldsymbol{1},\boldsymbol{1},\boldsymbol{1}]\!]$\\
(4-6)&$[\![1,2,3]\!]$&(4-17)&$[\![1,4,\boldsymbol{2}]\!]$&(4-30)&$[\![\boldsymbol{1},\boldsymbol{1},\boldsymbol{2}]\!]$\\   
(4-7)&$[\![1,2,4]\!]$&(4-19)&$[\![1,\boldsymbol{2},\boldsymbol{2}]\!]$&(4-34)&$[\![\boldsymbol{1},\boldsymbol{2},\boldsymbol{2}]\!]$\\
(4-8)&$[\![1,2,5]\!]$&(4-23)&$[\![2,2,\boldsymbol{1}]\!]$&(4-36)&$[\![\boldsymbol{1},\boldsymbol{2},\boldsymbol{4}]\!]$\\    
(4-9)&$[\![1,2,\boldsymbol{1}]\!]$&(4-25)&$[\![2,\boldsymbol{1},\boldsymbol{2}]\!]$&&\\
\hline
\end{tabular}
\end{footnotesize}
\end{table}

Note that for each case, the proof uses the set of integers that are represented by a particular ternary quadratic form having class number one, which are easily computable by Hasse principle.    
\vskip0.5pc

One of most important ingredients in the proofs of Cases (4-1), (4-4) and (4-11) is the existence of a ternary quadratic form having class number $1$. We crucially used the fact that the set of integers represented by such a ternary quadratic form is completely known. In the most of remaining cases, the quaternary quadratic form induced by a quaternary mixed sum of $4$- and $8$-gonal numbers under consideration does not have any ternary  quadratic subform having class number one.  Hence we use Theorem  \ref{pme} to compute the set of integers  in some arithmetic progression  that are represented by a ternary quadratic subform.  

\vskip 0.5pc
\noindent\textbf{Case (4-15)} $(\alpha_1,\beta_1,\beta_2)=(1,1,3)$. It suffices to show that for any $\alpha_2(\beta_3)$ such that $4\leq\alpha_2\leq18$ $(3\leq\beta_3\leq18)$, the equation 
$$
3x^2+3\alpha_2 y^2+z^2+3t^2=3N+4~(3x^2+y^2+3z^2+\beta_3 t^2=3N+4+\beta_3)
$$ 
has an integer solution $(x,y,z,t)\in\mathbb{Z}^4$ such that $zt\not\equiv0\Mod3 (yzt\not\equiv0\Mod3$, respectively$)$ for any nonnegative integer $N$. 

First, assume $\alpha_2=4$. 
If $N\leq34$, then one may directly check that such an integer solution always exists. Therefore, we may assume $N\geq 35$.  Define 
$$
f(x,z,t)=3x^2+(3z+t)^2+3t^2\quad \text{and}\quad g(x,z,t)=27x^2+z^2+27t^2.
$$
Then one may directly compute that 
\begin{equation}\label{(18)}
g\prec_{11,r}f
\end{equation}
 for any remainder $r$ modulo $11$, where $r$ is $0$ or a quadratic non-residue modulo $11$.

Now, choose an integer $d\in\{0,1,2,3\}$ such that  $3N+4-12b^2>0$ and  $3N+4-12b^2$ is divisible by $11$ or a quadratic non-residue modulo $11$. 
Since $h(\langle 1,3,3\rangle)=1$,  one may easily check that
$$
3x^2+z^2+3t^2=3N+4-12b^2
$$
has an integer solution $(x,z,t) \in \mathbb Z^3$. 
Note that $z$ is not divisible by $3$.  
If either  $x$ or $t$ is not divisible by $3$, then we are done.  If both $x$ and $t$ are divisible by $3$, that is, 
 $3N+4-12b^2$ is represented by $g$,  then by \eqref{(18)},  it is represented by $f$.  
Therefore,  the equation 
$$
3x^2+12y^2+z^2+3t^2=3N+4
$$ 
has an integer solution $(x,y,z,t)\in\mathbb{Z}^4$ such that $zt\not\equiv0\Mod3$. 

In fact, if $\alpha_2\not\eq11$ or $\beta_3\not\eq11$, then the proofs of the cases are quite similar to the above.

Assume $\alpha_2=11$.
 If $N\leq9$, then one may directly check that the equation  
$$
3x^2+33y^2+z^2+3t^2=3N+4
$$ 
has an integer solution $(x,y,z,t)\in\mathbb{Z}^4$ such that $zt\not\equiv0\Mod3$. 
Therefore, we may assume $N\geq10$.
Note that the genus of $g(x,z,t)=27x^2+z^2+27t^2$ consists of 
$$
M_g=\langle1,27,27,\rangle,\quad M_2=\begin{pmatrix}4&1&0\\1&7&0\\0&0&27\end{pmatrix},\quad\text{and}\quad M_3=\begin{pmatrix}7&-3&2\\-3&9&3\\2&3&16\end{pmatrix}.
$$
By Minkowski-Siegel formula (for details, see \cite{ya}), we have
\begin{equation}\label{(18)m}
r(3N+4,\langle1,3,3\rangle)-r(3N+4,\langle1,27,27\rangle)=4\cdot r(3N+4,M_2)+4\cdot r(3N+4,M_3). 
\end{equation}
One may easily check that 
$$
M_g\prec_{4,0}M_2 \quad\text{and}\quad M_g\prec_{4,3}M_2.
$$
This implies that any integer congruent to $4$ or $7$ modulo $12$ is represented by $M_2$ or $M_3$.  
 First, assume $3N+4\equiv0,1,3\Mod4$. 
Then there is an integer $b\in\{0,1\}$ such that $3N+4-33b^2>0$ and  $3N+4-33b^2$ is congruent to $4$ or $7$ modulo $12$. 
Therefore by \eqref{(18)m}, we have
$$
r(3N+4-33b^2,\langle1,3,3\rangle)-r(3N+4-33b^2,\langle1,27,27\rangle)>0.
$$
Consequently,  the equation  
$$
3x^2+33y^2+z^2+3t^2=3N+4
$$ 
has an integer solution $(x,y,z,t)\in\mathbb{Z}^4$ such that $zt\not\equiv0\Mod3$.
 Now, assume $3N+4\equiv2\Mod4$.
 Define $h(x,y,z)=2x^2+2xy+17y^2+3z^2$. The genus of $h$ consists of 
$$
M_{h}=\begin{pmatrix}2&1\\1&17\end{pmatrix}\perp\langle3\rangle\quad\text{and}\quad M_4=\begin{pmatrix}5&2&1\\2&5&1\\1&1&5\end{pmatrix}.
$$
One may easily check that any positive integer $m$ congruent to $2$ modulo $3$ is represented by $M_{h}$ or $M_4$ by 102:5 of \cite{om}, for it is represented by $M_{h}$ over $\mathbb{Z}_p$ for any prime $p$.
One may easily check that 
$$
M_4\prec_{4,0}M_{h}\quad\text{and}\quad M_4\prec_{4,1}M_{h}.
$$ 
Now, choose an integer $d\in\{1,2\}$ such that $\frac12(3N+4)-3d^2>0$ and $\frac12(3N+4)-3d^2$ is congruent to $5$ or $8$ modulo $12$. Then  $\frac12(3N+4)-3d^2$ is represented by $M_{h}$. 
Therefore there are integers $a,b,c$ such that 
$$
(2a+b)^2+33b^2+3(c+d)^2+3(c-d)^2=3N+4.
$$ 
Since $d\not\equiv0\Mod3$, $c+d\not\equiv0\Mod3$ or $c-d\not\equiv0\Mod3$. Consequently, the equation  
$$
3x^2+33y^2+z^2+3t^2=3N+4
$$ 
has an integer solution $(x,y,z,t)\in\mathbb{Z}^4$ such that $zt\not\equiv0\Mod3$.

Assume $\beta_3=11$.  First, we assume that $N\geq88$.
Under the same notations as above, one may directly compute that 
$$
B_{M_3}(M_g,5,1)=\{\pm(1,0,0)\}\quad\text{and}\quad B_{M_3}(M_g,5,4)=\{\pm(2,0,0)\}.
$$ 
In each case, if we define $T=\begin{pmatrix}5&0&0\\0&4&-3\\0&3&4\end{pmatrix}$, then one may easily show that it satisfies all conditions in Theorem \ref{pme}. Note that $\pm (1,0,0)$ are the only integral primitive eigenvectors of $T$. 
Therefore, we have
\begin{equation} \label{core3}
S_{5,r}\cap Q(M_g)\setminus\{s^2\mid s\in\mathbb Z\}\subset Q(M_3),
\end{equation}
for any remainder $r$ modulo $5$, where $r$ is a quadratic residue modulo $5$.
Now, choose an integer $d\in\{1,2,5\}$ such that $I(N,d):=3N+15-11d^2\geq2$ and $I(N,d)\equiv1,4\Mod5$. 
We show that  $I(N,d)$ is also represented by $M_2$ or $M_3$. 
If $I(N,d)$ is not a square of an integer, then it is represented by $M_2$ or $M_3$ by \eqref{core3}. Assume that 
$I(N,d)$ is a square of an integer.
Since $4$ is represented by $M_2$, any square divisible by $4$ is represented by $M_2$. Since $9$ is represented by $M_3$, we may assume that $I(N,d)$ is an odd integer not divisible by $3$.      
For any prime $p$ dividing $I(N,d)$,  $p^2$ is represented by $M_2$ or $M_3$ by Lemma 2.4 of \cite{poly}, for both $M_2$ and $M_3$ are contained in the spinor genus of $M_g$. 
Therefore
$$
r(I(N,d),\langle1,3,3\rangle)-r(I(N,d),\langle1,27,27\rangle)>0,
$$
 by \eqref{(18)m}.
Hence, the equation 
$$
3x^2+y^2+3z^2+11t^2=3N+15
$$ 
has an integer solution $(x,y,z,t)\in\mathbb{Z}^4$ such that $yzt\not\equiv0\Mod3$. If $N \le 87$, then one may directly check that such an integer solution exists. 

Recall that we use \eqref{good} and Theorem \ref{pme} to show that integers in an arithmetic progressions are represented by the ternary quadratic forms.
In Case (4-18), as a representative case, we explain how our method works in detail. Since everything is quite similar to this for all the other cases, we only provide all parameters needed for computations in Table 4.3.

\vskip 0.5pc
\noindent\textbf{Case (4-18)} $(\alpha_1,\alpha_2,\beta_1)=(1,5,2)$.
It suffices to show that for any $\alpha_3(\beta_2)$ such that $5\leq\alpha_3\leq12$ $(2\leq\beta_2\leq12)$, the equation 
$$
3x^2+15y^2+3\alpha_3 z^2+2t^2=3N+2~(3x^2+15y^2+2z^2+\beta_2 t^2=3N+2+\beta_2)
$$ 
has an integer solution $(x,y,z,t)\in\mathbb{Z}^4$ such that $t\not\equiv0\Mod3(zt\not\equiv0\Mod3$, respectively$)$ for any nonnegative integer $N$.

First, assume that $\alpha_3=5$.
By Lemma \ref{reduction}, we may assume that $3N+2\not\equiv0\Mod4$.
If $N \leq 124$, then  one may directly check that 
$$
3x^2+15y^2+15z^2+2t^2=3N+2
$$
has an integer solution $(x,y,z,t)\in\mathbb{Z}^4$ such that $t\not\equiv0\Mod3$. Therefore we may assume that $N\geq125$.
Note that the genus of $f(x,y,t)=3x^2+15y^2+2t^2$ 
consists of 
$$
M_f=\langle2,3,15\rangle\quad\text{and}\quad M_2=\begin{pmatrix}2&1&-1\\1&5&1\\-1&1&11\end{pmatrix}.
$$
One may easily show that any positive integer $m$ congruent to $2$ modulo $3$ that is not of the form $2^{2l+1}(8k+3)$ for any nonnegative integers $l,k$ is represented by $M_f$ or $M_2$ by 102:5 of \cite{om}.    Now, one may easily check that  
$$
M_2\prec_{2,0}M_f.
$$ 
Therefore, any positive integer $m$ congruent to $2$ modulo $6$  that is not of the form $2^{2l+1}(8k+3)$ is represented by $M_f$. 
Hence we may choose an integer $c\in\{1,2,3,4,5\}$ such that $3N+2-15c^2>0$, $3N+2-15c^2\equiv2\Mod6$, and furthermore, $3N+2-15c^2$  is represented by $M_f$. 
Therefore, the equation 
$$
3x^2+15y^2+15z^2+2t^2=3N+2
$$
has an integer solution $(x,y,z,t)\in\mathbb{Z}^4$ such that $t\not\equiv0\Mod3$.

If either $\alpha_3$ or $\beta_2$ is odd, then each proof is quite similar to that of the case $\alpha_3=5$.

Assume $\alpha_3=8$. 
Note that the genus of $f(x,z,t)=3x^2+24z^2+2t^2$ consists of 
$$
M_{f}=\langle2,3,24\rangle\quad\text{and}\quad M_2=\begin{pmatrix}5&1\\1&5\end{pmatrix}\perp\langle6\rangle. 
$$
One may easily show that a positive integer $m$ which is congruent to $2$ modulo $3$ and not of the form $4^{l}(8k+1)$ and $4^{l}(8k+7)$ for any nonnegative integers $l,k$ is represented by $M_{f}$ or $M_2$ by 102:5 in \cite{om} for it is represented by $M_{f}$ over $\mathbb{Z}_p$ for any prime $p$.
Note that 
$$
B_{f}(M_2,3,2)=\{\pm(1,2,0)\}.
$$ 
If we define $T=\begin{pmatrix}2&-1&-2\\-1&2&-2\\2&2&1\end{pmatrix}$, 
then one may easily show that it satisfies all conditions in Theorem \ref{pme}. 
Note that $\pm(1,-1,0)$ are the only integral primitive eigenvectors of $T$. Therefore we have 
$$
S_{3,2}\cap Q(M_2)\setminus\{8s^2\mid s\in\mathbb Z\}\subset Q(M_{f}).
$$
Since $8$ is represented by $M_{f}$, $8s^2$ is represented by $M_{f}$ for any $s\in\mathbb Z$. 
Therefore, a positive integer $m$ which is congruent to $2$ modulo $3$ and not of the form $4^{l}(8k+1)$ and $4^{l}(8k+7)$ is represented by $M_{f}$. 
The rest of the proof is quite similar to that of $\alpha_3=5$.

Assume $\alpha_3=12$.
Note that the genus of $f(x,z,t)=3x^2+36z^2+2t^2$ consists of
$$
M_{f}=\langle2,3,36\rangle\quad\text{and}\quad M_2=\langle2,9,12\rangle.
$$
Note that 
$$
M_2\prec_{2,0}M_{f}.
$$ 
One may easily show that every positive integer congruent to $2$ modulo $6$ that is not of the form $2^{2l+1}(8k+7)$ for any nonnegative integers $l,k$ is represented by $M_f$. 
The rest of the proof is quite similar when $\alpha_3=5$.

Assume $\beta_2=10$.
By Lemma \ref{reduction}, we may assume that $3N+12\not\equiv0\Mod4$.
If $N\leq1276$, then one may directly check that the equation 
$$
3x^2+15y^2+2z^2+10t^2=3N+12
$$ 
has an integer solution $(x,y,z,t)\in\mathbb{Z}^4$ such that $zt\not\equiv0\Mod3$. 
Therefore, assume $N\geq1277$.
Note that the genus of $f(x,z,t)=\frac13(3x^2+2(3z+t)^2+10t^2)=x^2+6z^2+4t^2+4zt$ consists of 
$$
M_{f}=\langle1\rangle\perp\begin{pmatrix}4&2\\2&6\end{pmatrix} \quad\text{and}\quad M_2=\langle2,2,5\rangle.
$$
One may easily show that a positive integer $m$ which is not of the form $4^{l}(8k+3)$ for any nonnegative integers $l,k$ is represented by $M_{f}$ or $M_2$ by 102:5 in \cite{om} for it is represented by $M_{f}$ over $\mathbb{Z}_p$ for any prime $p$. 
Note that 
$$
M_2\prec_{3,0}M_{f}\quad\text{and}\quad  M_2\prec_{3,1}M_{f}.
$$ 
Therefore, a positive integer $m$ that is congruent to $0$ or $1$ modulo $3$ and not of the form $4^l(8k+3)$ is represented by $M_f$.
Hence there is an integer $b\in\{0,1,3,4\}$ such that both $N+4-5b^2$ and $N+4-5(b+12)^2$ are positive and represented by $M_{f}$. 
For all possible cases when both $N+4-5b^2$ and $N+4-5(b+12)^2$ are squares of integers, one may easily check that the equation 
$$
3x^2+15y^2+2z^2+10t^2=3N+12
$$ 
has an integer solution $(x,y,z,t)\in\mathbb{Z}^4$ such that $zt\not\equiv0\Mod3$. 
Therefore we may assume that $3x^2+2z^2+10t^2=3N+12-15b^2$ has an integer solution $(x,z,t)=(a,c,d)\in\mathbb{Z}^3$ such that  $2c^2+10d^2\not\eq0$.
If $c\equiv d\equiv0\Mod3$, then there are integers $e,f$ such that $e^2+5f^2=c^2+5d^2$ and $ef\not\equiv0\Mod3$, by Theorem 9 of \cite{Jones}. This completes the proof. 

Assume $\beta_2=12$.
By Lemma \ref{reduction}, we may assume that $3N+14\not\equiv0\Mod4$.
If $N\leq1800$, then one may directly check that the equation
$$
3x^2+15y^2+2z^2+12t^2=3N+14
$$
has an integer solution $(x,y,z,t)\in\mathbb{Z}^4$ such that $zt\not\equiv0\Mod3$. 
Therefore, assume $N\geq1801$.
Define 
$$
f(x,z,t)=12x^2+2(3z+t)^2+12t^2 ~\text{and}~  g(x,z,t)=108x^2+2z^2+108t^2.
$$
Note that 
\begin{equation}\label{(21)}
g\prec_{7,r}f_4,
\end{equation}
for any remainder $r$ modulo $7$, where $r$ is $0$ or a quadratic non-residue modulo $7$.
Since $h(\langle1,6,6\rangle)=1$, one may easily show that a positive integer $m$ which is congruent to $1$ modulo $3$ and not of the form $4^{l}(8k+3)$ for any nonnegative integers $l,k$ is represented by $\langle1,6,6\rangle$.
Hence there is an integer $0\leq b\leq19$ such that $3N+14-15b^2$ is positive, $3N+14-15b^2$ is divisible by 7 or a quadratic non-residue modulo $7$, and furthermore, $3N+14-15b^2$ is represented by $\langle2,12,12\rangle$. 
If $3N+14-15b^2$ is not represented by $g$, then we are done. 
Assume that $3N+14-15b^2$ is represented by $g$. By Equation \eqref{(21)}, $3N+14-15b^2$ is represented by $f$. 
Therefore the equation 
$$
3(2x)^2+15y^2+2(3z+t)^2+12t^2=3N+14
$$ 
has an integer solution. This completes the proof.

The proofs of all remaining cases are quite similar to those of Cases (4-1) or (4-11). 
This completes the proof of Case (4-18).

As noted earlier, since every proof of the case in the Table 4.3 is quite similar to that of Case (4-18), we only provide all parameters needed for the computations. 

\begin{table}[h]
\centering
\begin{footnotesize}
\renewcommand{\arraystretch}{0.9}\renewcommand{\tabcolsep}{0.3mm}
\begin{tabular}{|c|c|c|c|c|c|l|}\multicolumn{7}{c}{\textbf{Table 4.3.} Some data for quaternary universal  mixed sums}\\
\hline

Case&$M_f$&$M_2$&$(d,a)$&$B_f(M_2,d,a)$&$T$\!\!&\multicolumn{1}{c|}{$f$ represents $m$}\\
\hline
(4-12)&$\langle3,4,6\rangle$&$\langle1,6,12\rangle$&$(3,1)$&$\{\pm(1,0,0)\}$&$\begin{pmatrix}3&0&0\\0&1&-4\\0&2&1\end{pmatrix}$&$\setlength\arraycolsep{2pt}\begin{array}{l}m\geq2,\\ m\equiv1\Mod3,\\ m\not\eq2^{2l+1}(8k+7)\end{array}$\\
\hline

(4-13)&$\langle3,5,6\rangle$&$\begin{pmatrix}2&0&0\\0&6&3\\0&3&9\end{pmatrix}$&$(3,2)$&$\{\pm(1,0,0)\}$&$\begin{pmatrix}3&0&0\\0&3&3\\0&-2&1\end{pmatrix}$&$\setlength\arraycolsep{2pt}\begin{array}{l}m\equiv3\Mod2,\\ m\not\eq5^{2l+1}(5k+2),\\ m\not\eq5^{2l+1}(5k+3), \\m\not\eq2\cdot25^l\end{array}$\\
\hline

$\setlength\arraycolsep{2pt}\begin{array}{c}\text{(4-20)}\\\beta_3\!\not\eq\!3,4\end{array}$&$\begin{pmatrix}3&0&0\\0&5&1\\0&1&11\end{pmatrix}$&$\langle2,3,27\rangle$&$\setlength\arraycolsep{2pt}\begin{array}{c}(13,2)\\(13,5)\\(13,6)\\(13,7)\\(13,8)\\ (13,11)\end{array}$&$\setlength\arraycolsep{2pt}\begin{array}{l}\{\pm(1,0,0)\}\\\{\pm(3,0,0)\}\\\{\pm(4,0,0)\}\\\{\pm(6,0,0)\}\\\{\pm(2,0,0)\}\\\{\pm(5,0,0)\}\end{array}$&$\begin{pmatrix}13&0&0\\0&5&-36\\0&4&5\end{pmatrix}$&$\setlength\arraycolsep{2pt}
\begin{array}{l}m\geq3,\\m\equiv2\Mod3,\\m\equiv r_1\Mod{13}\end{array}$\\
\hline

(4-21)&$\langle2,3,4\rangle$&$\langle1,2,12\rangle$&$(3,0)$&$\emptyset$&~&$\setlength\arraycolsep{2pt}\begin{array}{l}m\equiv0\Mod3,\\m\not\eq2^{2l+1}(8k+5)\end{array}$\\
\hline

$\setlength\arraycolsep{2pt}\begin{array}{c}\text{(4-22)}\\\alpha_2\not\eq7\\\beta_3\not\eq7\end{array}$&$\langle2,3,5\rangle$&$\langle1,1,30\rangle$&$(7,r_2)$&$\emptyset$&~&$\setlength\arraycolsep{2pt}\begin{array}{l}m\equiv1\Mod3,\\m\equiv r_2\Mod7\end{array}$\\
\hline

\multirow{4}{*}{\!\!$\setlength\arraycolsep{2pt}\begin{array}{c}\text{(4-22)}\\\alpha_2=7\end{array}$}&$\langle3,5,21\rangle$&$\begin{pmatrix}5&0&0\\0&6&3\\0&3&12\end{pmatrix}$&$(8,r_3)$&$\emptyset$&~&\multirow{2}{*}{
$\setlength\arraycolsep{2pt}\begin{array}{l}m\equiv2\Mod3,\\m\not\equiv0\Mod5,\\m\not\equiv0\Mod7,\\m\equiv r_3\Mod8\end{array}$}\\
\cline{2-5}
&$\langle3,5,21\rangle$&$\begin{pmatrix}2&1&0\\1&8&3\\0&0&21\end{pmatrix}$&$(8,r_3)$&$\emptyset$& &\\
\hline

\multirow{2}{*}{$\setlength\arraycolsep{2pt}\begin{array}{c}\text{(4-22)}\\\beta_3=7\end{array}$}&$\langle2,3,7\rangle$&$\begin{pmatrix}2&1&1\\1&3&0\\1&0&9\end{pmatrix}$&$(3,0)$&$\emptyset$&~&\multirow{2}{*}{
$\setlength\arraycolsep{2pt}\begin{array}{l}m\equiv6\Mod9,\\m\not\eq2^{2l+1}(8k+3)\end{array}$}\\
\cline{2-5}
&$\langle2,3,7\rangle$&$\langle1,3,14\rangle$&$(3,0)$&$\emptyset$& &\\
\hline

(4-24)&$\langle1,6,12\rangle$&$\langle3,4,6\rangle$&$(3,1)$&$\{\pm(0,1,0)\}$&$\begin{pmatrix}1&0&-4\\0&3&0\\2&0&1\end{pmatrix}$&
$\setlength\arraycolsep{2pt}\begin{array}{l}m\equiv1\Mod3,\\m\not\eq2^{2l+1}(8k+7)\end{array}$\\
\hline

$\setlength\arraycolsep{2pt}\begin{array}{c}\text{(4-26)}\\\alpha_2\not\eq13\\\beta_3\not\eq13\end{array}$&$\begin{pmatrix}4&1&0\\1&7&0\\0&0&6\end{pmatrix}$&$\langle1,6,27\rangle$&$(13,r_4)$&$\emptyset$&~&
$\setlength\arraycolsep{2pt}\begin{array}{l}m\equiv1\Mod3,\\m\equiv r_4\Mod{13},\\m\not\eq2^{2l+1}(8k+7)\end{array}$\\
\hline

(4-32)&$\langle1,2,9\rangle$&$\begin{pmatrix}2&1&0\\1&3&1\\0&1&4\end{pmatrix}$&$(3,0)$&$\emptyset$&~&$\setlength\arraycolsep{2pt}\begin{array}{l}m\equiv0\Mod3,\\m\not\eq2^{2l+1}(8k+7)\end{array}$\\
\hline

(4-33)&$\langle1,2,12\rangle$&$\langle2,3,4\rangle$&$(3,0)$&$\emptyset$&~&
$\setlength\arraycolsep{2pt}\begin{array}{l}m\equiv0\Mod3,\\m\not\eq2^{2l+1}(8k+5)\end{array}$\\
\hline

\multicolumn{7}{r}{
$\begin{array}{ll}
r_1\in\{2,5,6,7,8,11\},&r_2\in\{0,3,5,6\},\\
r_3\in\{0,1,4,5\},&r_4\in\{0,2,5,6,7,8,11\}
\end{array}$}
\end{tabular}
\end{footnotesize}
\end{table}

\vskip0.5pc
\noindent\textbf{Case (4-31)} $(\beta_1,\beta_2,\beta_3)=(1,1,3)$.
It is enough to show that for any $\alpha_1$ such that $4\leq\alpha_1\leq6$, the equation 
$$
3\alpha_1 x^2+y^2+z^2+3t^2=3N+5
$$ 
has an integer solution $(x,y,z,t)\in\mathbb{Z}^4$ such that $yzt\not\equiv0\Mod3$ for any nonnegative integer $N$.

Since the proof of the case $\alpha_1=4$ is quite similar to that of Case (4-1), we only provide proofs of the cases when $\alpha_1\in\{5,6\}$. 

Assume $\alpha_1=5$.
Note that $h(\langle1,1,3\rangle)=1$.  
Every positive integer congruent to $2$ modulo $3$  is represented by $\langle1,1,3\rangle$.
Therefore, the equation 
$$
y^2+z^2+3t^2=3N+5
$$ 
has an integer solution $(y,z,t)=(b,c,d)\in\mathbb{Z}^3$. If $d\not\equiv0\Mod3$, then we are done. 
Assume that $d\equiv0\Mod3$. First, we consider the case when $d\not\eq0$. Then there are a positive integer $s$ and a positive integer $d_1$ not divisible by $3$  such that  $d=3^s\cdot d_1$. 
Now,  by Theorem 9 of \cite{Jones}, there are integers $d_2$ and $a$ such that $3^{2s+1}-15a^2=3d_2^2$  and $d_2\not\equiv0\Mod3$.
Therefore, we have  
$$
b^2+c^2+3(d_2\cdot d_1)^2+15(a\cdot d_1)^2=3N+5.
$$
Now, assume $d=0$.
If $c\equiv0\Mod2$, then $b^2+c^2=b^2+4(\frac c2)^2=b^2+(\frac c2)^2+3(\frac c2)^2$. Therefore, we may assume that $b\equiv c\equiv1\Mod2$. 
Note that $b^2+c^2=2\left(\frac{b+c}{2}\right)^2+2\left(\frac{b-c}{2}\right)^2$ and $b\equiv c \Mod 3$.
If $b\not\eq c$, then there are integers $b_1,c_1$ such that $b^2+c^2=2b_1^2+18c_1^2$ and $c_1\not\eq0$. 
By theorem 9 of \cite{Jones}, there are integers $d_3,a_2$ such that $18c_1^2=3d_3^2+15a_2^2$ and $a_2d_3\not\equiv0\Mod3$. 
Then $b_1^2+b_1^2+3d_3^2+15a_2^2=3N+5$. Finally, assume that $3N+5=2b^2$.
Note that the genus of $f(y,z,t)=(3y+t)^2+(3z+t)^2+3t^2$ consists of
$$
M_f=\begin{pmatrix}5&-2&2\\-2&8&1\\2&1&8\end{pmatrix}\quad\text{and}\quad M_2=\begin{pmatrix}2&1\\1&5\end{pmatrix}\perp\langle27\rangle.
$$
Note that $M_2$ is in the spinor genus of $M_f$.  
Clearly, it suffices to show that $3N+5=2b^2$ is represented bt $M_f$. Assume that $b$ is even.
Since $8$ is represented by $M_f$, $2b^2$ is represented by $M_f$. 
Since $18$ is also represented by $M_f$, we may assume that $b$ is relatively prime to $6$ by a similar reasoning to the above. Since $b>1$, there is a prime $p>3$ dividing $b$. Since $2$ is represented by $M_2$, $2p^2$ and therefore $2b^2$, is represented by $M_f$ by Lemma 2.4 of \cite{poly}.
This completes the proof.

Assume $\alpha_1=6$.
Note that the genus of $f_1(x,y,z)=18x^2+y^2+z^2$ consists of 
$$
M_{f_1}=\langle1,1,18\rangle\quad\text{and}\quad M_3=\langle2\rangle\perp\begin{pmatrix}2&1\\1&5\end{pmatrix}.
$$
Note that $M_3\prec_{2,0}M_{f_1}$. 
Then one may easily show that for a positive integer which is congruent to $2$ modulo $6$ and not of the form $2^{2l+1}(8k+7)$ for any nonnegative integers $l,k$ is represented by $M_f$. 
The rest of the proof is quite similar to that of Case (4-18).

\vskip 0.5pc
\noindent\textbf{Case (4-35)} $(\beta_1,\beta_2,\beta_3)=(1,2,3)$.
It is enough to show that for any $\alpha_1$ such that $5\leq\alpha_1\leq9$, the equation 
$$
3\alpha_1 x^2+y^2+2z^2+3t^2=3N+6
$$ 
has an integer solution $(x,y,z,t)\in\mathbb{Z}^4$ such that $yzt\not\equiv0\Mod3$ for any nonnegative integer $N$.

First, assume $\alpha_1=5$.
By Lemma \ref{reduction}, we may assume that $3N+6\not\equiv0\Mod4$.
If $ N\leq2643$, the one may directly check that such an integer solution exists.
Therefore, we may assume that $n\geq2644$.
Note that the genus of $f(y,z,t)=\frac13((t+3y)^2+2(t+3z)^2+3t^2)$ consists of 
$$
M_f=\begin{pmatrix}2&1&0\\1&3&1\\0&1&4\end{pmatrix}\quad\text{and}\quad M_2=\langle1,2,9\rangle.
$$
One may easily check that any positive integer $m$ which is not of the form $2^{2l+1}(8k+7)$ for any nonnegative integers $l,k$ is represented by $M_f$ or $M_2$.
 Furthermore,  we may check by a direct computation that 
$$
M_2\prec_{2,0}M_f.
$$
Then there is an integer $a\in\{0,1,2,3,5,7\}$ such that both $N+2-5a^2$ and $N+2-5(a+16)^2$ are positive and represented by $M_f$. 
For all possible cases when both $N+2-5a^2$ and $N+2-5(a+16)^2$ are of the form $10s^2$ for some $s\in\mathbb{Z}$, one may directly check that the equation 
$$
15x^2+y^2+2z^2+3t^2=3N+6
$$ 
has an integer solution $(x,y,z,t)\in\mathbb{Z}^4$ such that $yzt\not\equiv0\Mod3$. 
We may assume that at least one of $N+2-5a^2$ and $N+2-5(a+16)^2$  is not of the form $10s^2$ for any $s\in\mathbb{Z}$. 
Therefore, the equation 
$$
15x^2+y^2+2z^2+3t^2=3N+6
$$ 
has an integer solution $(x,y,z,t)=(a,b,c,d)\in\mathbb{Z}^4$ such that $b\equiv c\equiv d\Mod3$ and $3N+6-15a^2$ is not of the form $30s^2$. If $b\equiv c\equiv d\not\equiv0\Mod3$, then we are done. 
Assume $b\equiv c\equiv d\equiv0\Mod3$. 
If $\tau=\frac13\begin{pmatrix}2&2&3\\-1&-1&3\\-1&2&0\end{pmatrix}$, then one may easily check that $\tau(b,c,d)^t=(b_1,c_1,d_1)^t$ is also an integer solution of 
\begin{equation}\label{eq(4-35)}
y^2+2z^2+3t^2=3N+6-15a^2
 \end{equation}
Such that $b_1\equiv c_1\equiv d_1\Mod3$.
Therefore, there is a positive integer $m$ such that $\tau^m(a,b,c)^t=(a_m,b_m,c_m)^t$ is an integer solution of Equation \eqref{eq(4-35)} such that each of whose component is not divisible by $3$ or 
for any positive integer $m$, $\tau^m(a,b,c)^t=(a_m,b_m,c_m)^t$  is an integer solution of Equation \eqref{eq(4-35)}  each of whose component is divisible by $3$.
Since there are only finitely many integer solution of Equation \eqref{eq(4-35)} 
and $\tau$ has an infinite order, the latter is impossible unless $(a,b,c)$ is an eigenvector of $\tau$.
Note that $\pm(0,-3,2)$ are the only integer primitive eigenvectors of $\tau$.
Since $3N+6-15a^2$ is not of the form $30s^2$, the equation 
$$
y^2+2z^2+3t^2=3N+6-15a^2
$$ 
has an integer solution $(y,z,t)\in\mathbb{Z}^3$ such that $yzt\not\equiv0\Mod 3$. 

Assume that $\alpha_1$ is odd. Then every proof is quite similar to that of the case $\alpha_1=5$.

Assume $\alpha_1=6$. 
Note that the genus of $f_1(x,y,z)=18x^2+y^2+2z^2$ consists of 
$$
M_{f}=\langle1,2,18\rangle\quad\text{and}\quad M_2=\begin{pmatrix}3&-1&1\\-1&3&-1\\1&-1&5\end{pmatrix}.
$$
Note that 
$$
M_3\prec_{3,0}M_{f_1}.
$$
 Therefore, a positive integer which is divisible by $3$ and not of the form $4^{l}(8k+7)$ for any nonnegative integers $l,k$ is represented by $M_{f}$.
The rest of the proof is quite similar to that of Case (4-18).
  
Assume $\alpha_1=8$.
By Lemma \ref{reduction}, we may assume that $3N+6\not\equiv0\Mod4$.
If $1\leq N\leq286$, the one may directly check that the equation 
$$
24x^2+y^2+2z^2+3t^2=3N+6
$$ 
has an integer solution such that $yzt\not\equiv0\Mod3$. 
Therefore, we assume that $N\geq277$.
There is an integer $a\in\{3,6\}$ such that $3N+6-24a^2>0$ and furthermore, $3N+6-24a^2$ is represented by $\langle1,2,3\rangle$.
Therefore, the equation 
$$
y^2+2z^2+3t^2=3N+6-24a^2
$$ 
has an integer solution $(y,z,t)=(b,c,d)\in\mathbb{Z}^3$.
We may assume that $b^2+2c^2\not\eq0$ and further assume that $bc\not\equiv0\Mod3$ by theorem 9 of \cite{Jones}.
 If $d\not\equiv0\Mod3$, then we are done. Assume $d\equiv0\Mod3$. 
Since $d^2+8a^2\not\eq0$ and $d^2+8a^2\equiv0\Mod3$, there are integers $e,f$ such that $d^2+8a^2=e^2+8f^2$ and $ef\not\equiv0\Mod3$ by Theorem 9 of \cite{Jones}. This completes the proof of Case (4-35).  \end{proof}

\section{The $61$-theorem of generalized $4$- and $8$-gonal numbers}
In this section, we determine all $n$-ary proper universal mixed sums of generalized $4$- and $8$-gonal numbers for any $n\ge5$.
Furthermore, we give a simple criterion on the universality of an arbitrary mixed sum  of generalized $4$- and $8$-gonal numbers, which is a generalization of the ``$15$-theorem".

\begin{thm}\label{quinary}
There are exactly $707$ quinary proper universal mixed sums of generalized $4$- and $8$-gonal numbers {\rm(}for the list of them, see Table 5.1{\rm)}.
\end{thm}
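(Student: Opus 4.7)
The plan is to follow the escalation strategy used in Theorems \ref{ternary} and \ref{quaternary}. Any proper quinary universal mixed sum
$$
\Phi=[\![\alpha_1,\dots,\alpha_u,\boldsymbol{\beta_1},\dots,\boldsymbol{\beta_v}]\!]
$$
with $u+v=5$ must contain some quaternary proper sub-sum $\Phi'$ that is itself \emph{not} universal; otherwise $\Phi'$ would be a universal proper sub-sum of $\Phi$, contradicting the properness of $\Phi$. Hence $\Phi'$ must be one of the $17$ non-universal quaternary sums that arose in the proof of Theorem \ref{quaternary}, whose truants $t(\Phi')$ are recorded in Table 5.1. Since $\Phi$ represents $t(\Phi')$, at least one of the two new coefficients $\alpha_{u_1+1}$ or $\beta_{v_1+1}$ is bounded above by $t(\Phi')$, producing a finite, explicit list of quinary candidates. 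After discarding candidates that are already contained in a previously handled case, the surviving list is collected into Table 5.1.

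For each surviving candidate $\Phi$, the task is to verify universality by exhibiting, for every $N\ge 0$, an integer solution of \eqref{core'}. Thanks to Lemma \ref{reduction}, it suffices to treat $N$ with $3N+\beta_1+\cdots+\beta_v\not\equiv 0\Mod 4$, together with a finite initial range of $N$ that is handled by direct computer search. For the infinite tail I would isolate a ternary sub-form $f$ of the associated quinary quadratic form, assign the two ``extra'' coordinates from a short finite set of residues coprime to $3$, and reduce the problem to representation by $f$ of an integer in a prescribed arithmetic progression. When $f$ has class number one the Hasse principle finishes the case; otherwise I would invoke the ``good-coset'' criterion \eqref{good} together with Theorem \ref{pme}, exactly as in Cases (4-15), (4-18), and (4-35), tabulating the parameters $(M_f,M_2,d,a,B_f(M_2,d,a),T)$ in a table analogous to Table 4.3.

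The hard part will be the cases in which no ternary sub-form of class number one is available and in which the spinor genus of the natural ternary sub-form contains more than one class. In such cases one must supplement Theorem \ref{pme} with the Minkowski--Siegel mass formula (to compare the representation counts of genus-mates of $f$) and with the Jones-type identity $e^2+kf^2=c^2+kd^2$ with $ef\not\equiv 0\Mod 3$ from Theorem 9 of \cite{Jones}, used to trade an integer solution whose relevant coordinates are all divisible by $3$ for one having at least one coordinate coprime to $3$, in the manner of the delicate analysis of Cases (4-15) and (4-35). A handful of candidates is expected to be non-universal; for these I would exhibit the smallest unrepresented integer directly and record it in the exception list. Once every candidate has been resolved in this way, the count of proper universal quinary sums comes out to exactly $707$, establishing the theorem.
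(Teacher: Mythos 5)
Your proposal follows essentially the same route as the paper: escalation from the non-universal quaternary sums of Table 4.1 to a finite list of $708$ candidates (of which only $[\![1,2,\boldsymbol{5},\boldsymbol{5},\boldsymbol{5}]\!]$, with truant $20$, fails), followed by case-by-case verification using Lemma \ref{reduction}, class-number-one ternary subforms, Theorem \ref{pme}, the Minkowski--Siegel formula, and Jones's theorem. The only device you do not name explicitly is the paper's frequent shortcut of first computing the full exception set $E(\Phi')$ of a quaternary sub-sum (Table 5.2) and then covering those exceptions with the fifth variable, but this is a variant of the same toolbox rather than a genuinely different method.
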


\begin{proof}
Let $u,v$ be nonnegative integers such that $u+v=5$. 
For positive integers $\alpha_1,\dots,\alpha_u$ and $\beta_1,\dots,\beta_v$, assume that a quinary mixed sum
$$
\Phi=[\![\alpha_1,\dots,\alpha_u,\boldsymbol{\beta_1},\dots,\boldsymbol{\beta_v}]\!]
$$ 
 is proper universal. 
Then by Theorems \ref{ternary} and \ref{quaternary}, there are nonnegative integers $u_1,v_1$ such that $u_1+v_1=4$, and the proper sum 
$$
\Phi'=[\![\alpha_1,\dots,\alpha_{u_1},\boldsymbol{\beta_1},\dots,\boldsymbol{\beta_{v_1}}]\!]
$$ 
of $\Phi$ is one of the sums in Table 5.1 which is not universal(see also Table 4.1).  
For each case, from the universality of $\Phi$ we know that at least one of $\alpha_{u_1+1}$ or $\beta_{v_1+1}$ is less than equal to the truant of the mixed sum  $\Phi'$ given in Table 5.1.
Therefore, we have exactly $708$ candidates of proper quinary universal mixed sums.

 Now, we show that all candidates are, in fact, universal except $[\![1,2,\boldsymbol{5},\boldsymbol{5},\boldsymbol{5}]\!]$. 
The truant of the remaining quinary sum is $20$.
As in Theorem \ref{quaternary}, we may assume that $uv\not\eq0$.
Since the proofs  are quite similar to each other, we only provide, as representative cases,  the proofs of Cases
$$
\text{(5-2)}, \ \text{(5-14)}, \ \text{and} \ \text{(5-16)}.
$$

\begin{table}[h]
\centering
\begin{footnotesize}
\renewcommand{\arraystretch}{0.9}\renewcommand{\tabcolsep}{0.5mm}
\begin{tabular}{|c|c|c|ll|ll|l|}\multicolumn{8}{c}{\textbf{Table 5.1.} Quinary and senary universal mixed sums}\\
\hline
Case&$\Phi'$&$t(\Phi')$&\multicolumn{2}{c|}{Considered already} &\multicolumn{3}{c|}{Universal case} \\
\hline
(5-1)&$[\![1,2,5,5]\!]$&$15$&$6\leq\alpha_5\leq10$,&$6\leq\beta_1\leq10$&\multicolumn{2}{l|}{$\alpha_5,\beta_1=5,~11\leq\alpha_5,\beta_1\leq15$}&~\\
(5-2)&$[\![1,2,5,\boldsymbol{5}]\!]$&$15$&$5\leq\alpha_4\leq10$,&$6\leq\beta_2\leq10$&$11\leq\alpha_4\leq15$,&$\beta_2=5,11\leq\beta_2\leq15$&~\\
(5-3)&$[\![1,2,\boldsymbol{1},\boldsymbol{14}]\!]$&$61$&$2\leq\alpha_3\leq15$,&$\beta_3=15$&$16\leq\alpha_3\leq61$,&$\beta_3=14,16\leq\beta_3\leq61$&~\\
(5-4)&$[\![1,2,\boldsymbol{2},\boldsymbol{7}]\!]$&$30$&$2\leq\alpha_3\leq7$&~&$8\leq\alpha_3\leq30$,&$7\leq\beta_3\leq30$&~\\
(5-5)&$[\![1,2,\boldsymbol{3},\boldsymbol{3}]\!]$&$13$&$2\leq\alpha_3\leq10$,&$4\leq\beta_3\leq10$&$11\leq\alpha_3\leq13$,&$\beta_3=3,11\leq\beta_3\leq13$&~\\
(5-6)&$[\![1,2,\boldsymbol{4},\boldsymbol{14}]\!]$&$60$&$2\leq\alpha_3\leq14$&~&$15\leq\alpha_3\leq60$,&$14\leq\beta_3\leq60$&~\\
(5-7)&$[\![1,2,10,\boldsymbol{5}]\!]$&$20$&$~$&$6\leq\beta_2\leq9$&$10\leq\alpha_4\leq20$,&$\beta_2=5, 10\leq\beta_2\leq20$&~\\
(5-8)&$[\![1,2,\boldsymbol{5},\boldsymbol{5}]\!]$&$15$&$2\leq\alpha_3\leq10$,&$6\leq\beta_3\leq9$&$11\leq\alpha_3\leq15$,&$\beta_3=5,10\leq\beta_3\leq15$&$\beta_3\!\not\eq\!5$\\
(5-9)&$[\![1,2,\boldsymbol{5},\boldsymbol{10}]\!]$&$20$&$2\leq\alpha_3\leq10$&~&$11\leq\alpha_3\leq20$,&$10\leq\beta_3\leq20$&~\\
(5-10)&$[\![2,4,\boldsymbol{1},\boldsymbol{14}]\!]$&$61$&$4\leq\alpha_3\leq15$,&$\beta_3=15$&$16\leq\alpha_3\leq61$,&$\beta_3=14, 16\leq\beta_3\leq61$&~\\
(5-11)&$[\![2,\boldsymbol{1},\boldsymbol{3},\boldsymbol{14}]\!]$&$60$&$2\leq\alpha_2\leq14$&~&$15\leq\alpha_2\leq60$,&$14\leq\beta_4\leq60$&~\\
(5-12)&$[\![3,\boldsymbol{1},\boldsymbol{1},\boldsymbol{7}]\!]$&$30$&$3\leq\alpha_2\leq7$&~&$8\leq\alpha_2\leq30$,&$7\leq\beta_4\leq30$&~\\
(5-13)&\!$[\![\boldsymbol{1},\boldsymbol{1},\boldsymbol{2},\boldsymbol{14}]\!]$\!&$60$&$1\leq\alpha_1\leq14$&~&$15\leq\alpha_1\leq60$,&$14\leq\beta_5\leq60$&~\\
(5-14)&$[\![7,\boldsymbol{1},\boldsymbol{1},\boldsymbol{3}]\!]$&$14$&~&$\beta_4=3,5,6$&$7\leq\alpha_2\leq14$,&$\beta_4=4, 7\leq\beta_4\leq14$&~\\
(5-15)&$[\![\boldsymbol{1},\boldsymbol{1},\boldsymbol{3},\boldsymbol{4}]\!]$&$18$&$1\leq\alpha_1\leq7$,&$\beta_5=5,6$&$8\leq\alpha_1\leq18$,&$\beta_5=4, 7\leq\beta_5\leq18$&~\\
(5-16)&$[\![\boldsymbol{1},\boldsymbol{1},\boldsymbol{3},\boldsymbol{7}]\!]$&$14$&$1\leq\alpha_1\leq7$&~&$8\leq\alpha_1\leq14$,&$7\leq\beta_5\leq14$&~\\
(5-17)&$[\![\boldsymbol{1},\boldsymbol{2},\boldsymbol{3},\boldsymbol{3}]\!]$&$12$&$1\leq\alpha_1\leq9$,&$4\leq\beta_5\leq9$&$10\leq\alpha_1\leq12$,&$\beta_5=3, 10\leq\beta_5\leq12$&~\\
\hline 
\!\!(6-1)&$[\![1,2,\boldsymbol{5},\boldsymbol{5},\boldsymbol{5}]\!]$&20&$1\leq\alpha_3\leq15$,&$6\leq\beta_4\leq15$&$16\leq\alpha_3\leq20$&$\beta_4=5, 16\leq\beta_4\leq20$&~\\
\hline
\end{tabular}
\end{footnotesize}
\end{table}

\noindent\textbf{Case (5-2)} $(\alpha_1,\alpha_2,\alpha_3,\beta_1)=(1,2,5,5)$.
It is enough to show that for any $\alpha_4(\beta_2)$ such that $11\leq\alpha_4\leq15$ $(\beta_2=5$ or $11\leq\beta_2\leq15)$, the equation
$$
\begin{array}{c}
x^2+2y^2+5z^2+\alpha_4t^2+5(3s^2-2s)=N~\\
(x^2+2y^2+5z^2+5(3t^2-2t)+\beta_2(3s^2-2s)=N, ~\text{respectively})
\end{array}
$$
has an integer solution $(x,y,z,t,s)\in\mathbb{Z}^5$ for any nonnegative integer $N$. 

Assume $\alpha_4=11$.
If $N\leq10$, then one may directly check that the equation
$$
x^2+2y^2+5z^2+11t^2+5(3s^2-2s)=N
$$ 
has an integer solution $(x,y,z,t,s)\in\mathbb{Z}^5$.
Since $h(\langle1,2,5\rangle)=1$, one may easily show that every positive integer not of the form $5^{2l+1}(5k+2)$ and $5^{2l+1}(5k+3)$ for any nonnegative integer $l,k$ is represented by $\langle1,2,5\rangle$.
Therefore, we may assume $N\equiv0\Mod5$. Then $N-11$ is represented by $\langle1,2,5\rangle$. If $\alpha_4$ or $\beta_2$ is not divisible by $5$, then the proofs  are quite similar to this. 

Assume $\alpha_4=15$. 
If $N\leq20$, then one may directly check that the equation
$$
x^2+2y^2+5z^2+15t^2+5(3s^2-2s)=N
$$ 
has an integer solution $(x,y,z,t,s)\in\mathbb{Z}^5$. 
Assume $N\geq21$. 
We may assume that $N\equiv0\Mod5$.
Then there are integers $d,e\in\{0,1\}$ such that $N-15d^2-5(3e^2-2e)$ is represented by $\langle1,2,5\rangle$.

If $\beta_2\in\{5,15\}$, then the proof is quite similar to the above.
This completes the proof of Case (5-2).

\vskip 0.5pc
\noindent\textbf{Case (5-14)} $(\alpha_1,\beta_1,\beta_2,\beta_3)=(7,1,1,3)$.
It is enough to show that for any $\alpha_2(\beta_4)$ such that  $7\leq\alpha_2\leq14$ $(\beta_4=4$ or $7\leq\beta_4\leq14)$, the equation
$$
21x^2+3\alpha_2 y^2+z^2+t^2+3s^2=3N+5~(21x^2+y^2+z^2+3t^2+\beta_4 s^2=3N+5+\beta_4)
$$
has an integer solution $(x,y,z,t,s)\in\mathbb{Z}^5$ such that $zts\not\equiv0\Mod3$ $(yzts\not\equiv0\Mod3$, respectively$)$ for any nonnegative integer $N$.

We show that 
\begin{equation}\label{5-14}
x_1^2+y_1^2+3z_1^2+21t_1^2=3N+5 ~\text{with}~ x_1y_1z_1\not\equiv0\Mod3
\end{equation}
has an integer solution $(x_1,y_1,z_1,t_1)\in\mathbb{Z}^4$ for any nonnegative integer $N$ except $14$.
If $N\leq 782$ and $N\not\eq14$, then one may directly check that Equation \eqref{5-14} 
has an integer solution. 
Note that Equation \eqref{5-14} does not have an integer solution for $N=14$, whereas it has an integer solution for $N=4\cdot14=56$. 
Therefore, we may assume that $N\geq783$ and $3N+5\not\equiv0\Mod4$ by similar reasoning of Lemma \ref{reduction}. 
Since $h(\langle1,1,21\rangle)=1$, one may easily show that a positive integer which is congruent to $2$ modulo $3$, not divisible by $7$, and not of the form $4^{l}(8k+3)$ for any nonnegative integers $l,k$ is represented by $\langle1,1,21\rangle$.
Then there is an integer $c\in\{2,4,14,28\}$ such that $3N+5-3c^2>0$, and furthermore, $3N+5-3c^2$ is represented by $\langle1,1,21\rangle$. 
Therefore, the equation 
$$
x_1^2+y_1^2+3z_1^2+21t_1^2=3N+5
$$ 
has an integer solution $(x_1,y_1,z_1,t_1)\in\mathbb{Z}^4$ such that $x_1y_1z_1\not\equiv0\Mod3$ for any nonnegative integer $N$ except $14$. 
The proof of Case (5-14) follows immediately.

 In most of the cases, we can exactly compute the set of all positive integers not represented by $\Phi'$,  denoted by $E(\Phi')$. 
If a proper sum $\Phi'$ of $\Phi$ is a one of the sums in Table 5.2, then the proof is quite similar to that of Case (5-14).

\begin{table}[h]
\centering
\begin{footnotesize}
\renewcommand{\arraystretch}{1}\renewcommand{\tabcolsep}{3mm}
\begin{tabular}{|c|c|c||c|c|c|}\multicolumn{6}{c}{\textbf{Table 5.2.} All cases whose proofs are simair to that of (5-14)}\\
\hline
Case&$\Phi'$&$E(\phi')$&Case&$\Phi'$&$E(\phi')$\\
\hline
(5-1)&$[\![1,2,5,5]\!]$&$\{15\}$&(5-9)&$[\![1,2,\boldsymbol{5},\boldsymbol{10}]\!]$&$E^*$\\
(5-3)&$[\![1,2,\boldsymbol{1},\boldsymbol{14}]\!]$&$\{61\}$&(5-10)&$[\![2,4,\boldsymbol{1},\boldsymbol{14}]\!]$&$\{61\}$\\
(5-4)&$[\![1,2,\boldsymbol{2},\boldsymbol{7}]\!]$&$\{30\}$&(5-11)&$[\![2,\boldsymbol{1},\boldsymbol{3},\boldsymbol{14}]\!]$&$\{60\}$\\
(5-5)&$[\![1,2,\boldsymbol{3},\boldsymbol{3}]\!]$&$\{13\}$&(5-12)&$[\![3,\boldsymbol{1},\boldsymbol{1},\boldsymbol{7}]\!]$&$\{30\}$\\
(5-6)&$[\![1,2,\boldsymbol{4},\boldsymbol{14}]\!]$&$\{60\}$&(5-13)&$[\![\boldsymbol{1},\boldsymbol{1},\boldsymbol{2},\boldsymbol{14}]\!]$&$\{60\}$\\
(5-7)&$[\![1,2,10,\boldsymbol{5}]\!]$&$\{20\}$&(5-15)&$[\![\boldsymbol{1},\boldsymbol{1},\boldsymbol{3},\boldsymbol{4}]\!]$&$\{18\}$\\
(5-8)&$[\![1,2,\boldsymbol{5},\boldsymbol{5}]\!]$&$\{15,20\}$&(5-17)&$[\![\boldsymbol{1},\boldsymbol{2},\boldsymbol{3},\boldsymbol{3}]\!]$&$\{12\}$\\
\hline
\multicolumn{6}{r}{$E^*=\{r\cdot25^s-5 \mid 1\leq r\leq 4,~s\geq1\}$}
\end{tabular}
\end{footnotesize}
\end{table}

\begin{rmk}
(i) In \cite{dick}, Dickson proved that the sum $[\![1,2,5,5]\!]$ represents all positive integers, except $15$.

\noindent (ii) In \cite{octagonal}, it was proved that the sums $[\![\boldsymbol{1},\boldsymbol{1},\boldsymbol{2},\boldsymbol{14}]\!],~ [\![\boldsymbol{1},\boldsymbol{1},\boldsymbol{3},\boldsymbol{4}]\!], ~\text{and}~ [\![\boldsymbol{1},\boldsymbol{2},\boldsymbol{3},\boldsymbol{3}]\!]$ represent all positive integers, except $60$, $18$, and $12$, respectively.   
\end{rmk}

\noindent\textbf{Case (5-16)} $(\beta_1,\beta_2,\beta_3,\beta_4)=(1,1,3,7)$.
It is enough to show that for any $\alpha_1$ such that  $8\leq\alpha_1\leq14$, the equation
$$
3\alpha_1 x^2+y^2+z^2+3t^2+7s^2=3N+12
$$ 
has an integer solution $(x,y,z,t,s)\in\mathbb{Z}^5$ such that $yzts\not\equiv0\Mod3$ for any nonnegative integer $N$. 

Assume $\alpha_1=8$.
If $N\leq1742$, then one may directly check that the equation
$$
24x^2+y^2+z^2+3t^2+7s^2=3N+12
$$ 
has an integer solution $(x,y,z,t,s)\in\mathbb{Z}^5$ such that $yzts\not\equiv0\Mod3$.
Therefore, assume $N\geq 1743$.
Note that the genus of $f(y,z,t)=(3y+t)^2+(3z+t)^2+3t^2$ consists of
$$
M_f=\begin{pmatrix}5&-2&2\\-2&8&1\\2&1&8\end{pmatrix}\quad\text{and}\quad M_2=\begin{pmatrix}2&1\\1&5\end{pmatrix}\perp\langle27\rangle.
$$ 
One may easily show that a positive integer congruent to $2$ modulo $3$ is represented by $M_f$ or $M_2$ by 102:5 of \cite{om} for it is represented  by $M_f$ over $\mathbb{Z}_p$ for any prime $p$.
Note that 
$$
\begin{small}
\begin{array}{ll}
B_f(M_2,13,1)=\{\pm(0,0,1)\},&B_f(M_2,13,3)=\{\pm(0,0,4)\},\\
B_f(M_2,13,4)=\{\pm(0,0,2)\},&B_f(M_2,13,9)=\{\pm(0,0,3)\},\\
B_f(M_2,13,10)=\{\pm(0,0,6)\},&B_f(M_2,13,12)=\{\pm(0,0,5)\}.
\end{array}
\end{small}
$$
In each case, if we define $T=\begin{pmatrix}9&20&0\\-8&1&0\\0&0&13\end{pmatrix}$, then one may easily show that it satisfies all conditions in Theorem \ref{pme}. 
Note that $\pm(0,0,1)$ are the only integral primitive eigenvectors of $T$. Then we have
$$
S_{13,r}\cap Q(M_2)\setminus\{27s^2\mid s\in\mathbb{Z}\}\subset Q(M_f),
$$ 
for any remainder $r$ modulo $13$,  where $r$ is a quadratic residue modulo $13$. 
Therefore, a positive integer which is congruent to $2$ modulo $3$ and quadratic residue modulo $13$ is represented by $M_f$.
There are integers $a\in\{1,2,3,13\}$ and $e\in\{1,2,13\}$ such that $3N+12-7e^2-24a^2>0$ and furthermore, $3N+12-7e^2-24a^2$ is represented by $M_f$.
Therefore, the equation  
$$
y^2+z^2+3t^2=3N+12-7e^2-24a^2
$$ 
has an integer solution $(y,z,t)=(b,c,d)\in\mathbb{Z}^3$ such that $b\equiv c\equiv d\Mod3$. 
This competes the proof.

Assume that $\alpha_1\not\eq13$. Then every proof is quite similar to that of the case $\alpha_1=8$.

Assume $\alpha_1=13$. 
If $N\leq18$, then one may directly check that the equation 
$$
39x^2+y^2+z^2+3t^2+7s^2=3N+12
$$ 
has an integer solution $(x,y,z,t,s)\in\mathbb{Z}^5$ such that $yzts\not\equiv0\Mod3$. 
Therefore, assume $N\geq19$. 
Note that $h(\langle1,1,3\rangle)=1$.
There are integers $e\in\{1,2\}$ and $a\in\{0,1\}$ such that $3N+12-7e^2-39a^2>0$ and $3N+12-7e^2-39a^2$ is congruent to $5$ or $8$ modulo $12$.
Therefore, the equation 
$$
y^2+z^2+3t^2=3N+12-7e^2-39a^2
$$ 
has an integer solution $(y,z,t)=(b,c,d)\in\mathbb{Z}^3$ such that $bc\not\equiv0\Mod3$ and
$c\equiv d\Mod2$.
If $d\not\equiv0\Mod3$, then we are done.
Assume $d\equiv0\Mod3$. 
Then 
$c^2+3d^2=\left(\frac{c+3d}{2}\right)^2+3\cdot\left(\frac{c-d}{2}\right)^2$ and $\left(\frac{c+3d}{2}\right)\cdot\left(\frac{c-d}{2}\right)\not\equiv0\Mod3$.
This completes the proof of Case (5-16).
\end{proof}

\begin{thm}\label{senary}
There are exactly $11$ senary proper universal mixed sums of generalized $4$- and $8$-gonal numbers {\rm(}for the list of them, see Table 5.1{\rm)}.
\end{thm}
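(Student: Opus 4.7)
The plan is to carry out one more round of the escalation method that produced Theorems \ref{ternary}, \ref{quaternary}, and \ref{quinary}. Suppose $\Phi=[\![\alpha_1,\dots,\alpha_u,\boldsymbol{\beta_1},\dots,\boldsymbol{\beta_v}]\!]$ with $u+v=6$ is senary proper universal. Then by the previous three theorems, some proper quinary subsum $\Phi'$ of $\Phi$ must fail to be universal, and the only such sum that has not already been discarded as a candidate is the unique non-universal proper quinary sum from Table 5.1, namely $\Phi'=[\![1,2,\boldsymbol{5},\boldsymbol{5},\boldsymbol{5}]\!]$ with truant $t(\Phi')=20$. So row (6-1) of Table 5.1 exhausts all senary candidates: the added coefficient must be $\le20$, yielding $[\![1,2,\alpha_3,\boldsymbol{5},\boldsymbol{5},\boldsymbol{5}]\!]$ for $2\le\alpha_3\le20$ or $[\![1,2,\boldsymbol{5},\boldsymbol{5},\boldsymbol{5},\boldsymbol{\beta_4}]\!]$ for $5\le\beta_4\le20$.

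Next I would discard the already-considered candidates. Every $[\![1,2,\alpha_3,\boldsymbol{5},\boldsymbol{5},\boldsymbol{5}]\!]$ with $\alpha_3\le15$ strictly contains a proper universal sum from Theorems \ref{ternary}--\ref{quinary} (e.g. $[\![1,2,5,\boldsymbol{5}]\!]$-escalations in Table 5.1), and similarly $[\![1,2,\boldsymbol{5},\boldsymbol{5},\boldsymbol{5},\boldsymbol{\beta_4}]\!]$ with $6\le\beta_4\le15$ contains one of the proper universal quinary sums listed under row (5-2) or (5-8). The surviving list is therefore $\alpha_3\in\{16,17,18,19,20\}$ together with $\beta_4\in\{5,16,17,18,19,20\}$, giving exactly $5+6=11$ candidates, matching the count in the theorem.

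It then remains to show that each of these 11 sums is universal. Reducing to \eqref{core} via the substitution $P_8(y)=\tfrac{(3y-1)^2-1}{3}\cdot\tfrac{1}{2}$ (as in Section 2), each such $\Phi$ being universal is equivalent to the representation, with the prescribed mod-$3$ congruence on the $y_i$, of every integer in a fixed arithmetic progression by a diagonal senary quadratic form of shape $\langle 3,6,3\alpha_3,5,5,5\rangle$ or $\langle 3,6,5,5,5,\beta_4\rangle$. Using Lemma \ref{reduction} to restrict to residues $\bmod4$, I would fix small values of two of the six variables (say the ``octagonal slots'' with coefficients $5$) to absorb the congruence condition modulo $3$, leaving a quaternary representation problem. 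The argument is essentially the same recipe already applied to Case (5-2): exploit $h(\langle1,2,5\rangle)=1$ to control the $(x_1,x_2,t,s)$-subsum, and invoke Theorem \ref{pme} together with the Minkowski--Siegel formula only where a genus with class number $>1$ intervenes. Finitely many small values of $N$ can be checked by direct computation.

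The main obstacle will be bookkeeping rather than genuinely new analysis: for the five $\alpha_3\in\{16,\dots,20\}$ cases the coefficient $3\alpha_3\in\{48,51,54,57,60\}$ sometimes introduces a ternary subform with class number $>1$, and one has to pick, case by case, an auxiliary congruence $d\bmod m$ and a transformation $T\in M_3(\mathbb Z)$ satisfying the hypotheses of Theorem \ref{pme}; similarly for $\beta_4\in\{16,\dots,20\}$. Since the coefficient patterns are mild perturbations of those treated in (5-2), (5-8) and (5-9), the same table-of-parameters format used in Table 4.3 and Table 5.2 should suffice, and no new technique beyond what has already been deployed should be required.
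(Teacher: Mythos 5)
Your enumeration of the candidates is exactly the paper's: the only non-universal proper quinary sum is $[\![1,2,\boldsymbol{5},\boldsymbol{5},\boldsymbol{5}]\!]$ with truant $20$, and after removing escalations already covered by the quinary theorem one is left with $\alpha_3\in\{16,\dots,20\}$ and $\beta_4\in\{5,16,\dots,20\}$, i.e.\ $11$ candidates. Up to that point you match the paper.

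Where you diverge is the verification that these $11$ sums are universal, and here your plan both overshoots and underdelivers. The paper does not rerun the quadratic-form machinery (Lemma \ref{reduction}, Theorem \ref{pme}, Minkowski--Siegel) on senary forms at all. It uses a fact it has already paid for in the quinary section: Table 5.2 records that the exceptional set of $[\![1,2,\boldsymbol{5},\boldsymbol{5}]\!]$ is exactly $\{15,20\}$, hence $[\![1,2,\boldsymbol{5},\boldsymbol{5},\boldsymbol{5}]\!]$ represents every nonnegative integer except $20$ (since $15=10+5\cdot P_8(1)$ and $10\notin\{15,20\}$). Each senary candidate is this quinary sum plus one extra term, so universality reduces to the single check that the candidate represents $20$: for $16\le\alpha_3\le20$ one uses $20-\alpha_3\in\{0,1,2,3,4\}$, for $\beta_4=5$ one uses $20-5=15$, and for $16\le\beta_4\le20$ one uses $20-\beta_4\in\{0,\dots,4\}$, all of which lie outside $\{20\}$. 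That is the entire proof. Your proposed route --- fixing two octagonal slots, invoking $h(\langle1,2,5\rangle)=1$, and hunting for transformations $T$ satisfying Theorem \ref{pme} for each of the coefficients $3\alpha_3\in\{48,\dots,60\}$ --- is not wrong in principle, but as written it is a programme rather than a proof: none of the auxiliary congruences, lattices, or matrices is actually produced, and the claim that ``no new technique should be required'' is exactly the part that would need to be substantiated case by case. You should instead notice that the exceptional set of the unique non-universal quinary sum is already known and finite (indeed a singleton), which collapses the senary step to checking one integer.
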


\begin{proof}
Let $u,v$ be nonnegative integers such that $u+v=6$. 
For positive integers $\alpha_1,\dots,\alpha_u$ and $\beta_1,\dots,\beta_v$, assume that a senary mixed sum
$$
\Phi=[\![\alpha_1,\dots,\alpha_u,\boldsymbol{\beta_1},\dots,\boldsymbol{\beta_v}]\!]
$$ 
of generalized $4$- and $8$-gonal numbers is proper universal.
Then by Theorems \ref{ternary}, \ref{quaternary}, and \ref{quinary}, $\Phi$ has a proper sum $[\![1,2,\boldsymbol5,\boldsymbol5,\boldsymbol5]\!]$ (see Table 5.1).
From the universality of $\Phi$, we know that at least one of $\alpha_3$ or $\beta_4$ is less than equal to $20$ that is the truant of $[\![1,2,\boldsymbol5,\boldsymbol5,\boldsymbol5]\!]$.

\vskip 0.5pc
\noindent\textbf{Case (6-1)} $(\alpha_1,\alpha_2,\beta_1,\beta_2,\beta_3)=(1,2,5,5,5)$.
It is enough to show that for any $\alpha_3(\beta_4)$ such that $16\leq\alpha_3\leq20$ $(\beta_4=5$ or $16\leq\beta_4\leq20)$, the equation
$$
\begin{small}
\begin{array}{c}
x^2+2y^2+\alpha_3 z^2+5(3t^2-2t)+5(3s^2-2s)+5(3u^2-2u)=N\\
(x^2+2y^2+5(3z^2-2z)+5(3t^2-2t)+5(3s^2-2s)+\beta_4(3u^2-2u)=N,~\text{respectively})
\end{array}
\end{small}
$$
has an integer solution $(x,y,z,t,s,u)\in\mathbb{Z}^6$ for any nonnegative integer $N$.
Since the mixed sum $[\![1,2,\boldsymbol5,\boldsymbol5]$ of generalized $4$- and $8$-gonal numbers represents all nonnegative integers except $15$ and $20$ (see Table 5.2),  the quinary mixed sum $[\![1,2,\boldsymbol5,\boldsymbol5,\boldsymbol5]\!]$ of generalized $4$- and $8$-gonal numbers represents all positive integers except $20$. 
The proof follows immediately.
\end{proof}

Finally, we give an effective criterion on the universality of an arbitrary mixed sum  of generalized $4$- and $8$-gonal numbers. 
\begin{thm}
Let $\alpha_1,\dots,\alpha_u$ and $\beta_1,\dots,\beta_v$ be positive integers. The mixed sum 
$$
\Phi=\alpha_1P_{4}(x_1)+\cdots+\alpha_uP_{4}(x_u)+\beta_1P_8(y_1)+\cdots+\beta_vP_{8}(y_v) 
$$ 
of generalized $4$- and $8$-gonal numbers is universal if and only if it represents the  integers
$$
1,\ 2,\ 3,\ 4,\ 5,\ 6,\ 7,\ 8,\ 9,\ 10,\ 12,\ 13,\ 14,\ 15,\ 18,\ 20,\ 30,\ 60,\ \text{and} \ \ 61.
$$
\end{thm}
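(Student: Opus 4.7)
The plan is to prove the theorem by invoking the escalation method, in the spirit of the ``$15$-theorem'' \cite{b} and the ``octagonal theorem of sixty'' \cite{octagonal}, together with the complete classification of proper universal mixed sums obtained in Theorems \ref{ternary}, \ref{quaternary}, \ref{quinary}, and \ref{senary}. The ``only if'' direction is immediate, since a universal mixed sum represents every nonnegative integer. For the ``if'' direction, given any $\Phi$ that represents the $19$ listed integers, I will construct by induction on $n$ a subsum $\Psi_n$ of $\Phi$ of rank $n$ which, at some step, coincides with one of the $1271$ proper universal mixed sums classified in the previous sections; universality of $\Psi_n$ then propagates to $\Phi$.

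The escalation proceeds as follows. Starting from the empty sum $\Psi_0$ (whose truant is $1$), the fact that $1\rightarrow\Phi$ forces $\Psi_1\in\{[\![1]\!],[\![\boldsymbol{1}]\!]\}$. Each unary candidate has truant $2\rightarrow\Phi$, so $\Psi_2$ must be one of the binary candidates considered in Table 3.1. Iterating this procedure, at each stage either $\Psi_n$ already appears on the classified list of proper universal mixed sums (and we are done), or $\Psi_n$ is non-universal with a well-defined truant $t(\Psi_n)$ listed in Tables 3.1, 4.1, or 5.1. Since $t(\Psi_n)\rightarrow\Phi$, at least one unused coefficient of $\Phi$ must be $\le t(\Psi_n)$, which forces $\Psi_{n+1}$ into a prescribed finite set of candidates. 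Because the escalation tree has depth at most $6$ (by Theorem \ref{senary}), the process terminates after finitely many steps.

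The main obstacle --- and in fact the substantive content of the theorem --- is to verify that \emph{every} truant appearing anywhere in the escalation tree lies in the $19$-element set $S=\{1,2,3,4,5,6,7,8,9,10,12,13,14,15,18,20,30,60,61\}$. This is established by direct inspection of the ``$t(\Phi')$'' columns of Tables 3.1, 4.1, and 5.1, together with the truant $20$ of the unique non-universal senary escalator $[\![1,2,\boldsymbol{5},\boldsymbol{5},\boldsymbol{5}]\!]$ recorded in the proof of Theorem \ref{senary}. The union of all truants that actually occur along the escalation tree is $\{3,4,5,6,7,8,9,10,12,13,14,15,18,20,30,60,61\}$, and adjoining the base-case truants $1$ and $2$ recovers exactly $S$. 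In particular, the omission of $11$, $16$, $17$, $19$, and all integers in $[21,29]\cup[31,59]$ from $S$ is not accidental but reflects the empirical fact that no non-universal escalator has a truant in those ranges. Once this verification is complete, the induction yields a proper universal subsum $\Psi\subseteq\Phi$, and the theorem follows.
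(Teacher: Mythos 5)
Your proposal is correct and follows essentially the same route as the paper: the ``only if'' direction is trivial, and the ``if'' direction runs the escalation using the truant data of Tables 3.1, 4.1, and 5.1 (plus the base truants $1$ and $2$ and the truant $20$ of $[\![1,2,\boldsymbol{5},\boldsymbol{5},\boldsymbol{5}]\!]$) to force a universal proper subsum, the key point being that every truant occurring in the escalation tree lies in the $19$-element set. The paper states this more tersely, but the content is identical.
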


\begin{proof}
Assume that $\Phi=[\![\alpha_1,\dots,\alpha_{u},\boldsymbol{\beta_1},\dots,\boldsymbol{\beta_{v}}]\!]$ represents above $19$ integers. 
Then one may easily show that  there are nonnegative integers $u_1,v_1$ such that $u_1\leq u$, $v_1\leq v$, and furthermore, the proper sum 
$$
\widetilde{\Phi}=[\![\alpha_1,\dots,\alpha_{u_1},\boldsymbol{\beta_1},\dots,\boldsymbol{\beta_{v_1}}]\!]
$$ 
of $\Phi$ is one of the mixed sums of Tables 3.1, 4.1, and  5.1.  
Then the the proper sum $\widetilde{\Phi}$ of $\Phi$ is universal 
by Theorems \ref{ternary}, \ref{quaternary}, \ref{quinary}, and \ref{senary}. Therefore, the mixed sum $\Phi$ of generalized $4$- and $8$-gonal numbers is universal.
\end{proof}

\end{document}